\definecolor{webgreen}{rgb}{0,.5,0}
\definecolor{webbrown}{rgb}{.6,0,0}
\newcommand{\braces}{\genfrac{\lbrace}{\rbrace}{0pt}{}}
\begin{document}

\theoremstyle{plain}
\newtheorem{theorem}{Theorem}
\newtheorem{corollary}[theorem]{Corollary}
\newtheorem{proposition}{Proposition}
\newtheorem{lemma}{Lemma}
\newtheorem{example}{Examples}
\newtheorem{remark}{Remark}

\begin{center}
\vskip 1cm
{\LARGE\bf
Three combinatorial sums involving central binomial coefficients
}

\vskip 1cm

{\large
Kunle Adegoke \\
Department of Physics and Engineering Physics, \\ Obafemi Awolowo University, 220005 Ile-Ife, Nigeria \\
\href{mailto:adegoke00@gmail.com}{\tt adegoke00@gmail.com}

\vskip 0.2 in

Robert Frontczak \\
Independent Researcher, 72762 Reutlingen \\ Germany \\
\href{mailto:robert.frontczak@web.de}{\tt robert.frontczak@web.de}
}

\end{center}

\vskip .2 in

\begin{abstract}
We study three classes of combinatorial sums involving central binomial coefficients and harmonic numbers, 
odd harmonic numbers, and even indexed harmonic numbers, respectively. In each case we use summation by parts to derive recursive
expressions for these sums. In addition, we offer an alternative approach to express one class of sums and some related sums 
in closed form in terms of Stirling numbers and $r$-Stirling numbers of the second kind. 
\end{abstract}

\noindent 2010 {\it Mathematics Subject Classification}: 30B50, 33E20.

\noindent \emph{Keywords:} Sum, harmonic number, odd harmonic number, central binomial coefficient, Stirling number of the second kind,
$r$-Stirling number of the second kind.

\bigskip

\section{Introduction}

The central binomial coefficients are defined for $n=0,1, \ldots,$ by
\begin{equation*}
\binom {2n}{n} = \frac{(2n)!}{(n!)^{2}}.
\end{equation*}
These numbers are listed under the id A000984 in the OEIS database \cite{OEIS}. They are closely related to the famous Catalan numbers
(entry A000108 in \cite{OEIS})
\begin{equation*}
C_{n} =\frac{1}{n+1} \binom{2n}{n}
\end{equation*}
and possess many intriguing properties (\cite{Just,Mattarei,Mikic,Pomerance,Sun}). \\

Riordan \cite[p. 130]{Riordan} proved the combinatorial identity
\begin{equation}\label{Riordan1}
\sum_{k=0}^n 2^{-2k} \binom{2k}{k} \frac{1}{1-2k} = 2^{-2n} \binom{2n}{n}.
\end{equation}
This identity has a dual counterpart as given by
\begin{equation}\label{eq.fw0v51n}
\sum_{k=0}^n (-1)^k \binom{n}{k} \binom{2k}{k}^{-1} 2^{2k} = \frac{1}{1-2n}.
\end{equation}
See \cite{Adegoke2} for a generalization of these sums. Alzer and Nagy \cite{Alzer} proved several identities involving
the central binomial coefficients and Catalan numbers. Boyadzhiev \cite{Boyadzhiev} considered four identities for the squared central binomial coefficients. \\

The following combinatorial identity is also well-known:
\begin{equation}\label{central_bin}
\sum_{k=0}^n 2^{-2k} \binom{2k}{k} = 2^{-2n} (2n+1) \binom{2n}{n}.
\end{equation}
This identity is stated, for instance, as Theorem 23 in \cite{Qi} and as Theorem 4 in \cite{Guo}.
Both proofs are based on integration and apply integral representations for the Catalan numbers.
In \cite[Theorem 7.1]{Bataille} the following generalization was proved: For $r\geq 0$ define
\begin{equation}\label{central_bin_gen}
S_r(n) = \sum_{k=0}^n 2^{-2k} k^r \binom{2k}{k}, \qquad S_0(n) = \sum_{k=0}^n 2^{-2k} \binom{2k}{k}.
\end{equation}
Then
\begin{equation}\label{eq1_theorem7_1}
S_0(n) = 2^{-2n} (2n+1) \binom{2n}{n}
\end{equation}
and
\begin{equation}\label{eq2_theorem7_1}
(2r+1) S_r(n) = n^r 2^{-2n} (2n+1) \binom{2n}{n} + 2\sum_{j=1}^{r-1}\binom{r}{j+1}(-1)^{j+1}S_{r-j}(n).
\end{equation}
For $r=1,2,3$ the recursion \eqref{eq2_theorem7_1} immediately produces the identities
\begin{equation}\label{cb_sum1}
\sum_{k=0}^n 2^{-2k} k \binom{2k}{k} = \frac{n}{3} 2^{-2n} (2n+1) \binom{2n}{n},
\end{equation}
\begin{equation}\label{cb_sum2}
\sum_{k=0}^n 2^{-2k} k^2 \binom{2k}{k} = \frac{n}{15} 2^{-2n} (3n+2) (2n+1) \binom{2n}{n},
\end{equation}
and
\begin{equation}\label{cb_sum3}
\sum_{k=0}^n 2^{-2k} k^3 \binom{2k}{k} = \frac{n}{105} 2^{-2n} (15n^2+18n+2) (2n+1) \binom{2n}{n}.
\end{equation}

As was observed in \cite{Bataille}, for $r\ge 1$, the sum $S_r(n)$ is of the form $n(2n+1)2^{-2n}\binom{2n}{n}P_r(n)$
for some polynomial $P_r(x)$ of degree $m-1$. The recursion \eqref{eq2_theorem7_1} shows that the polynomials $P_r(x)$
satisfy $P_1(x)=\frac{1}{3}$ and
\begin{equation*}
(2r+1) P_r(x) = x^{r-1} + 2 \sum_{j=1}^{r-1} \binom{r}{j+1}(-1)^{j+1} P_{r-j}(x).
\end{equation*}

The next combinatorial sum is very similar to \eqref{central_bin} and involves the familiar odd harmonic numbers $O_n$ (see \cite{Adegoke}):
\begin{equation}\label{cb_ohn_k0}
\sum_{k=0}^n 2^{-2k} \binom{2k}{k} O_k = 2^{-2n} \binom{2n}{n}\left ((2n+1)O_n - 2n \right ).
\end{equation}
Here and throughout the paper $O_n,n\geq 0,$ are the odd harmonic numbers and $H_n,n \geq 0,$ are the ordinary harmonic numbers, i.e.,
\begin{equation*}
H_0 = 0, \quad H_n = \sum_{m=1}^n \frac{1}{m} \qquad \text{and} \qquad O_0 = 0, \quad O_n = \sum_{m=1}^n \frac{1}{2m-1}.
\end{equation*}

In this paper, we are concerned with the following three classes of combinatorial sums defined for all integers $r \geq 0$:
\begin{equation}\label{central_bin_hn}
S_r^H(n) = \sum_{k=0}^n 2^{-2k} k^r \binom{2k}{k} H_k, \qquad S_0^H(n) = \sum_{k=0}^n 2^{-2k} \binom{2k}{k} H_k,
\end{equation}
its odd variant
\begin{equation}\label{central_bin_ohn}
S_r^O(n) = \sum_{k=0}^n 2^{-2k} k^r \binom{2k}{k} O_k, \qquad S_0^O(n) = \sum_{k=0}^n 2^{-2k} \binom{2k}{k} O_k,
\end{equation}
and their even indexed cousin
\begin{equation}\label{central_bin_hn_even}
S_r^{2H}(n) = \sum_{k=0}^n 2^{-2k} k^r \binom{2k}{k} H_{2k}, \qquad S_0^{2H}(n) = \sum_{k=0}^n 2^{-2k} \binom{2k}{k} H_{2k}.
\end{equation}

We will show that there are close connections between the sums $S_r^H(n)$, respectively $S_r^O(n)$ and $S_r^{2H}(n)$, and $S_r(n)$.
More precisely, we will use summation by parts solve these sums recursively for each $r$, hereby using the recursion for $S_r(n)$ 
as in \eqref{eq2_theorem7_1}. In addition, we will offer an alternative approach to express $S_r^O(n)$ and some related sums in closed form 
in terms of Stirling numbers and $r$-Stirling numbers of the second kind.

\section{The sums $S_r^H(n)$ and $S_r^O(n)$}

As will become obvious in a sequel, it is better to start with the sums $S_r^O(n)$. Our first main result in this paper is the next theorem.

\begin{theorem}\label{main_1}
Let $S_r^O(n)$ be defined as in \eqref{central_bin_ohn}. Then
\begin{equation}
S_0^O(n) = 2^{-2n} \binom{2n}{n}\left ((2n+1)O_n - 2n \right )
\end{equation}
and for $r\geq 1$ we have the following recursion:
\begin{equation}\label{main_eq1}
(2r+1) S_r^O(n) = n^r 2^{-2n} \binom{2n}{n} \left ((2n+1)O_n + 1 \right ) - S_r(n) + 2\sum_{j=1}^{r-1}\binom{r}{j+1}(-1)^{j+1}S_{r-j}^O(n),
\end{equation}
where $S_r(n)$ is defined in \eqref{central_bin_gen} and given recursively in \eqref{eq2_theorem7_1}.
\end{theorem}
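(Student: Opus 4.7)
The plan is to apply Abel's summation by parts to peel the factor $O_k$ away from the remaining central-binomial weight, reducing $S_r^O(n)$ to a companion sum on which the recursion \eqref{eq2_theorem7_1} for $S_r(n)$ can act term by term.

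With $a_k = 2^{-2k} k^r \binom{2k}{k}$ (so that the partial sums are exactly $A_k = S_r(k)$) and $b_k = O_k$ (whose forward difference is $b_{k+1}-b_k = 1/(2k+1)$), the standard Abel identity $\sum_{k=0}^n a_k b_k = A_n b_n - \sum_{k=0}^{n-1} A_k (b_{k+1}-b_k)$ yields
\[
S_r^O(n) = S_r(n)\,O_n - T_r(n), \qquad T_r(n) := \sum_{k=1}^n \frac{S_r(k-1)}{2k-1}.
\]
The whole problem now reduces to understanding $T_r(n)$.

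Next I would substitute the recursion \eqref{eq2_theorem7_1}, applied at index $k-1$, into the summand of $T_r(n)$. The key observation is that the leading term is $(k-1)^r 2^{-2(k-1)}(2k-1)\binom{2k-2}{k-1}$; the factor $2k-1$ cancels the denominator, and the resulting sum $\sum_{k=1}^n (k-1)^r 2^{-2(k-1)}\binom{2k-2}{k-1}$ reindexes to $S_r(n-1) = S_r(n) - n^r 2^{-2n}\binom{2n}{n}$. This produces the clean recursion
\[
(2r+1)\,T_r(n) = S_r(n) - n^r 2^{-2n}\binom{2n}{n} + 2\sum_{j=1}^{r-1}\binom{r}{j+1}(-1)^{j+1}\,T_{r-j}(n).
\]

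For $r=0$ the convolution is empty and $T_0(n) = S_0(n-1)$; inserting \eqref{eq1_theorem7_1} and simplifying via $\binom{2n-2}{n-1} = \tfrac{n}{2(2n-1)}\binom{2n}{n}$ gives $T_0(n) = 2n\cdot 2^{-2n}\binom{2n}{n}$, and plugging into $S_0^O(n) = S_0(n) O_n - T_0(n)$ produces the claimed closed form. For $r\ge 1$, I would substitute $T_{r-j}(n) = S_{r-j}(n) O_n - S_{r-j}^O(n)$ into the $T_r$-recursion and use $O_n$ times \eqref{eq2_theorem7_1} to cancel all $O_n$-weighted convolution terms; what remains is exactly \eqref{main_eq1}. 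The main obstacle is the bookkeeping in this final cancellation, ensuring that the combined coefficient $(2n+1)O_n + 1$ emerges as a single compact factor rather than an awkward recombination of the two recursions.
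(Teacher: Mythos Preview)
Your proof is correct but takes a genuinely different route from the paper's. The paper works directly with the summand: setting $u_k = 2^{-2k}\binom{2k}{k}O_k$, $w_k = 2^{-2k}\binom{2k}{k}$, and $z_k = u_k + w_k$, it first establishes the pointwise telescoping identity $u_k = 2(k+1)(z_k - z_{k+1})$, then applies summation by parts with the weight $a_k = k(k-1)^r$ against the difference $z_k - z_{k+1}$, and only at the end invokes \eqref{eq2_theorem7_1} to clean up the resulting $S_r$ terms. Your approach instead peels off $O_k$ first via Abel summation, reducing everything to the auxiliary sum $T_r(n) = \sum_{k=1}^n S_r(k-1)/(2k-1)$, and then feeds the already-established recursion \eqref{eq2_theorem7_1} (applied at index $k-1$) through term by term; the cancellation of the denominator $2k-1$ against the factor $2(k-1)+1$ in the leading term of $S_r(k-1)$ is exactly what makes this succeed. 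Your route is arguably more transparent, since it treats \eqref{eq2_theorem7_1} as a black box and transports it directly to $S_r^O$ without re-deriving the underlying telescoping structure; the paper's route, by contrast, is more self-contained in that it rebuilds the summation by parts from the raw weights $u_k,w_k$ and only calls on \eqref{eq2_theorem7_1} once for the final simplification. The bookkeeping you flag as the ``main obstacle'' is in fact entirely routine: multiplying \eqref{eq2_theorem7_1} by $O_n$ and subtracting your $T_r$-recursion cancels the $O_n$-weighted convolution terms identically, leaving precisely $n^r 2^{-2n}\binom{2n}{n}\bigl((2n+1)O_n + 1\bigr)$ from the two boundary contributions.
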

\begin{proof}
Let us define two sequences $u_k=2^{-2k}\binom{2k}{k} O_k$ and $w_k=2^{-2k}\binom{2k}{k}$. First we note that
\begin{equation}\label{wk_prop}
w_k = 2(k+1)(w_k - w_{k+1}).
\end{equation}
Using this property we have
\begin{align*}
u_k - u_{k+1} &= 2^{-2k}\binom{2k}{k} O_k - 2^{-2(k+1)}\binom{2(k+1)}{k+1} O_k - 2^{-2(k+1)}\binom{2(k+1)}{k+1} \frac{1}{2k+1} \\
&= \frac{1}{2(k+1)} 2^{-2k}\binom{2k}{k} O_k - \frac{1}{2(k+1)} 2^{-2k}\binom{2k}{k}.
\end{align*}
Hence,
\begin{equation*}
2(k+1) (u_k - u_{k+1}) = u_k - w_k = u_k - 2(k+1) (w_k - w_{k+1})
\end{equation*}
or
\begin{equation*}
u_k = 2(k+1) (u_k + w_k - (u_{k+1} + w_{k+1})).
\end{equation*}
From here we can use telescoping and calculate
\begin{align*}
\sum_{k=0}^n u_k &= 2 \left( \sum_{k=0}^n (k+1)(u_k - u_{k+1}) + \sum_{k=0}^n (k+1)(w_k - w_{k+1})\right ) \\
&= 2 \sum_{k=0}^n (u_k + w_k) - 2(n+1)(u_{n+1} + w_{n+1})
\end{align*}
and, using \eqref{central_bin},
\begin{equation*}
\sum_{k=0}^n u_k = - 2 \sum_{k=0}^n w_k + 2(n+1)(u_{n+1} + w_{n+1}) = 2^{-2n} \binom{2n}{n}\left ((2n+1)O_n - 2n \right )
\end{equation*}
as stated in \eqref{cb_ohn_k0}. Now, suppose that $r\ge 1$. To calculate $S_r^O(n)$ we use summation by parts with
$z_k=u_k+w_k$ and $a_k=k(k-1)^r$. We have
\begin{align*}
S_r^O(n) &= 2\sum_{k=0}^n k^r(k+1) (z_k - z_{k+1}) \\
&= 2\sum_{k=0}^n a_{k+1}(z_k - z_{k+1}) \\
&= 2\sum_{k=0}^n z_{k} (a_{k+1} - a_{k}) - 2a_{n+1} z_{n+1} \\
&= 2 \sum_{k=0}^n z_{k} \left ( (r+1)k^r - \sum_{j=2}^r \binom{r}{j}(-1)^j k^{r-j+1}\right) - 2z_{n+1}(n+1) n^r \\
&= (2r+2) \left ( S_r^O(n) + S_r(n) \right ) - 2(n+1)n^r(u_{n+1}+w_{n+1}) \\
& \qquad - 2 \sum_{j=2}^r \binom{r}{j} (-1)^j \left ( S_{r-j+1}^O(n) + S_{r-j+1}(n) \right ).
\end{align*}
Shifting the summation index and simplifying yields
\begin{align*}
(2r+1) S_r^O(n) &= n^r 2^{-2n} \binom{2n}{n}\left ((2n+1)O_n + 2n +2\right ) - (2r+2) S_r(n) \\
&\qquad + 2 \sum_{j=1}^{r-1} \binom{r}{j+1} (-1)^{j+1} \left ( S_{r-j}^O(n) + S_{r-j}(n) \right ).
\end{align*}
The final expression follows by inserting \eqref{eq2_theorem7_1} and again simplifying.
\end{proof}

\begin{corollary}
We have
\begin{equation}\label{cb_ohn_sum1}
\sum_{k=0}^n 2^{-2k} k \binom{2k}{k} O_k = \frac{n}{3} 2^{-2n} \binom{2n}{n}\Big ((2n+1)O_n - \frac{2}{3}(n-1)\Big ),
\end{equation}
\begin{equation}\label{cb_ohn_sum2}
\sum_{k=0}^n 2^{-2k} k^2 \binom{2k}{k} O_k = \frac{n}{15} 2^{-2n} \binom{2n}{n}\Big ((3n+2)(2n+1)O_n - \frac{2}{15}(9n-7)(n-1)\Big ),
\end{equation}
and
\begin{align}\label{cb_ohn_sum3}
\sum_{k=0}^n 2^{-2k} k^3 \binom{2k}{k} O_k &= \frac{n}{105} 2^{-2n} \binom{2n}{n}\Big ((15n^2+18n+2)(2n+1)O_n \nonumber \\
&\qquad\qquad\qquad\qquad - \frac{2}{105}(225n^2+198n-71)(n-1)\Big ).
\end{align}
\end{corollary}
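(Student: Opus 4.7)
The plan is to obtain each of the three identities by a direct application of the recursion \eqref{main_eq1} from Theorem~\ref{main_1}, starting with $r=1$ and bootstrapping to $r=2$ and $r=3$. At each stage I would feed in the closed forms already available for $S_r(n)$ from \eqref{cb_sum1}, \eqref{cb_sum2}, \eqref{cb_sum3}, together with the value of $S_0^O(n)$ stated in Theorem~\ref{main_1} and the previously derived $S_j^O(n)$ for $1\le j<r$.

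Concretely, for $r=1$ the sum $\sum_{j=1}^{r-1}\binom{r}{j+1}(-1)^{j+1}S_{r-j}^O(n)$ is empty, so \eqref{main_eq1} collapses to
\[
3\,S_1^O(n) = n\,2^{-2n}\binom{2n}{n}\bigl((2n+1)O_n+1\bigr) - S_1(n).
\]
Substituting $S_1(n)=\tfrac{n}{3}2^{-2n}(2n+1)\binom{2n}{n}$ and combining the two $(2n+1)$-free terms over a common denominator yields \eqref{cb_ohn_sum1}. For $r=2$, the cross term $2\binom{2}{2}(-1)^{2}S_1^O(n)=2S_1^O(n)$ appears, so I would insert both the closed form for $S_2(n)$ from \eqref{cb_sum2} and the just-proved \eqref{cb_ohn_sum1}; collecting the coefficient of $2^{-2n}\binom{2n}{n}(2n+1)O_n$ separately from the rational correction reproduces \eqref{cb_ohn_sum2}. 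The case $r=3$ is analogous, now with contributions $2\binom{3}{2}(-1)^{2}S_2^O(n)+2\binom{3}{3}(-1)^{3}S_1^O(n)=6S_2^O(n)-2S_1^O(n)$; substituting the two previous identities and \eqref{cb_sum3} and clearing fractions of $3,15,105$ produces \eqref{cb_ohn_sum3}.

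The conceptual content of the argument is already contained in Theorem~\ref{main_1}; only book-keeping remains. The main obstacle is therefore purely arithmetic: keeping the $(2n+1)O_n$ parts separate from the polynomial remainders, and simplifying the latter without error. A useful sanity check at each step is that the leading factor $2^{-2n}\binom{2n}{n}$ always survives as an overall prefactor, the $O_n$ part matches the corresponding coefficient of $(2n+1)$ in \eqref{cb_sum1}--\eqref{cb_sum3}, and the non-$O_n$ remainder vanishes at $n=1$ (consistent with the factor $(n-1)$ visible in \eqref{cb_ohn_sum1}--\eqref{cb_ohn_sum3}). This last zero follows from the easily verified fact that $S_r^O(1)=\tfrac{1}{2}\cdot 1^r\cdot O_1=\tfrac{1}{2}$ for every $r\ge 1$, which provides a cheap numerical test of each closed form before the algebraic simplification is finalised.
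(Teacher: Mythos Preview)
Your proposal is correct and follows exactly the paper's own route: the paper's proof is the single line ``Set $r=1,2,3$ in \eqref{main_eq1} and combine with \eqref{cb_sum1}--\eqref{cb_sum3}.'' Your additional sanity check via $S_r^O(1)=\tfrac12$ is a nice touch but not needed for the argument.
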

\begin{proof}
Set $r=1,2,3$ in \eqref{main_eq1} and combine with \eqref{cb_sum1}-\eqref{cb_sum3}.
\end{proof}

\begin{remark}
As $O_{n+1}=O_n+\frac{1}{2n+1}$ we can express the special cases equivalently as
\begin{equation}\label{cb_ohn_sum1_mod}
\sum_{k=0}^n 2^{-2k} k \binom{2k}{k} O_k = \frac{n}{3} (2n+1) 2^{-2n} \binom{2n}{n}\Big (O_{n+1} - \frac{1}{3}\Big ),
\end{equation}
\begin{equation}\label{cb_ohn_sum2_mod}
\sum_{k=0}^n 2^{-2k} k^2 \binom{2k}{k} O_k = \frac{n}{15} (2n+1) 2^{-2n} \binom{2n}{n}\Big ((3n+2)O_{n+1} - \frac{1}{15}(9n+16)\Big ),
\end{equation}
and
\begin{align}\label{cb_ohn_sum3_mod}
\sum_{k=0}^n 2^{-2k} k^3 \binom{2k}{k} O_k &= \frac{n}{105} (2n+1) 2^{-2n} \binom{2n}{n}\Big ((15n^2+18n+2)O_{n+1} \nonumber \\
&\qquad\qquad\qquad\qquad - \frac{1}{105}(225n^2+648n+352)\Big ).
\end{align}
\end{remark}

Our second theorem contains the results for the sums $S_r^H(n)$.

\begin{theorem}\label{main_2}
Let $S_r^H(n)$ be defined as in \eqref{central_bin_hn}. Then
\begin{equation}
S_0^H(n) = (2n+1) 2^{-2n} \binom{2n}{n} (H_n - 2) + 2
\end{equation}
and for $r\geq 1$ we have the following recursion:
\begin{align}\label{main_eq2}
(2r+1) S_r^H(n) &= n^r (2n+1) 2^{-2n} \binom{2n}{n} \left (H_n + 1 \right ) + 2 (-1)^r - S_r(n) \nonumber \\
&\qquad + 2\sum_{j=1}^{r-1}\binom{r}{j+1} (-1)^{j+1} S_{r-j}^H(n) - 2\sum_{j=0}^{r}\binom{r+1}{j} (-1)^{r-j} S_{j}(n),
\end{align}
where $S_r(n)$ is defined in \eqref{central_bin_gen} and given recursively in \eqref{eq2_theorem7_1}.
\end{theorem}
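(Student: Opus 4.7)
The plan is to imitate the proof of Theorem~\ref{main_1} as closely as possible, introducing the sequences $u_k=2^{-2k}\binom{2k}{k}H_k$ and $w_k=2^{-2k}\binom{2k}{k}$ and retaining the identity \eqref{wk_prop}. A direct computation using $H_{k+1}=H_k+\frac{1}{k+1}$ together with $w_{k+1}=\frac{2k+1}{2(k+1)}w_k$ yields the harmonic analogue
\begin{equation*}
2(k+1)(u_k - u_{k+1}) = u_k - 2w_{k+1},
\end{equation*}
equivalently $u_k = 2(k+1)(u_k-u_{k+1}) + 2w_{k+1}$. Note the asymmetry: the right-hand side now contains $2w_{k+1}$ rather than the $w_k$ that appeared in the odd-harmonic case, and this is the ultimate source of the extra $S_j(n)$ sum on the right of \eqref{main_eq2}.

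For the base case $r=0$, I sum this identity from $k=0$ to $n$, telescope the first piece via Abel summation as in the proof of Theorem~\ref{main_1}, and evaluate $\sum_{k=0}^n w_{k+1}=S_0(n+1)-1$ by \eqref{central_bin}. Routine algebra then produces $S_0^H(n)=(2n+1)2^{-2n}\binom{2n}{n}(H_n-2)+2$.

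For $r\geq 1$ I multiply the key identity by $k^r$ and sum:
\begin{equation*}
S_r^H(n) = 2\sum_{k=0}^n k^r(k+1)(u_k-u_{k+1}) + 2\sum_{k=0}^n k^r w_{k+1}.
\end{equation*}
The first piece is handled exactly as in the proof of Theorem~\ref{main_1}: summation by parts with $a_k=k(k-1)^r$ and the expansion $a_{k+1}-a_k=(r+1)k^r-\sum_{j=2}^r\binom{r}{j}(-1)^j k^{r-j+1}$ produces, after moving the $(2r+2)S_r^H(n)$ contribution to the left, the familiar skeleton containing the boundary value $2(n+1)n^r u_{n+1}$ and $\sum_{j=1}^{r-1}\binom{r}{j+1}(-1)^{j+1}S_{r-j}^H(n)$. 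The genuinely new second piece is reduced to basic sums by shifting $j=k+1$ and expanding $(j-1)^r=\sum_{i=0}^r \binom{r}{i}(-1)^{r-i}j^i$, giving
\begin{equation*}
\sum_{k=0}^n k^r w_{k+1} = \sum_{i=0}^r \binom{r}{i}(-1)^{r-i} S_i(n) - (-1)^r + n^r w_{n+1}.
\end{equation*}
Using $w_{n+1}=\frac{2n+1}{2(n+1)}w_n$, the boundary piece $2n^r w_{n+1}$ combines with $2(n+1)n^r u_{n+1}$ to leave the clean factor $n^r(2n+1)2^{-2n}\binom{2n}{n}H_n$, while the stray $-(-1)^r$ supplies the $2(-1)^r$ in \eqref{main_eq2}.

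The only real obstacle is cosmetic. The direct calculation above delivers the bracket $H_n$ together with the compact sum $-2\sum_{i=0}^r \binom{r}{i}(-1)^{r-i}S_i(n)$, whereas \eqref{main_eq2} displays the bracket $H_n+1$ together with $-S_r(n)-2\sum_{j=0}^r\binom{r+1}{j}(-1)^{r-j}S_j(n)$. To reconcile the two I would feed \eqref{eq2_theorem7_1} back in, trading the factor $n^r(2n+1)2^{-2n}\binom{2n}{n}$ for $(2r+1)S_r(n)$ modulo lower sums, and then invoke Pascal's rule $\binom{r+1}{j}-\binom{r}{j}=\binom{r}{j-1}$ together with a short reindexing $m=r-j$; the cross terms then cancel in matched pairs, confirming the two displayed forms coincide.
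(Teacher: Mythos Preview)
Your argument is correct and follows the same summation-by-parts scheme as the paper, but with a slightly different decomposition of the basic identity. The paper rewrites the key relation as
\[
v_k = 2(k+1)\bigl((v_k+w_k)-(v_{k+1}+w_{k+1})\bigr) + 2k(w_k-w_{k+1}),
\]
so that after multiplying by $k^r$ and summing the first block produces $S_r^H+S_r$ together, while the leftover piece $A_r(n)=\sum_k k^{r+1}(w_k-w_{k+1})$ is evaluated by a \emph{second} Abel summation with $b_k=(k-1)^{r+1}$, yielding directly the $\binom{r+1}{j}$-sum in \eqref{main_eq2}. You instead keep the identity in the form $u_k=2(k+1)(u_k-u_{k+1})+2w_{k+1}$, so the first block involves only $S_r^H$, and your leftover piece $\sum_k k^r w_{k+1}$ is handled by a shift $j=k+1$ followed by the binomial expansion of $(j-1)^r$, producing the compact $-2\sum_i\binom{r}{i}(-1)^{r-i}S_i(n)$ and the bracket $H_n$ rather than $H_n+1$. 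Your final reconciliation via \eqref{eq2_theorem7_1} and Pascal's rule $\binom{r+1}{j}-\binom{r}{j}=\binom{r}{j-1}$ is correct; in effect you trade the paper's second Abel summation for an index shift plus an appeal to the recursion for $S_r(n)$. Both routes cost about the same, though your intermediate formula (before matching it to \eqref{main_eq2}) is arguably tidier.
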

\begin{proof}
We apply the same ideas as in the proof of Theorem \ref{main_1}. We start with the two sequences
$v_k=2^{-2k}\binom{2k}{k} H_k$ and $w_k=2^{-2k}\binom{2k}{k}$. Then
\begin{align*}
v_k - v_{k+1} &= 2^{-2k}\binom{2k}{k} H_k - 2^{-2(k+1)}\binom{2(k+1)}{k+1} H_k - 2^{-2(k+1)}\binom{2(k+1)}{k+1} \frac{1}{k+1} \\
&= \frac{1}{2(k+1)} 2^{-2k}\binom{2k}{k} H_k - \frac{1}{2(k+1)} 2^{-2k}\binom{2k}{k}\left (1+\frac{k}{k+1}\right ).
\end{align*}
Hence, using \eqref{wk_prop},
\begin{equation*}
2(k+1) (v_k - v_{k+1}) = v_k - 2(k+1) (w_k - w_{k+1}) - 2k (w_k - w_{k+1})
\end{equation*}
or
\begin{equation*}
v_k = 2(k+1) (v_k - v_{k+1} + w_k - w_{k+1}) + 2k (w_k - w_{k+1}).
\end{equation*}
An application of the telescoping property yields
\begin{equation*}
\sum_{k=0}^n v_k = 2 \sum_{k=0}^n v_k - 2(n+1)v_{n+1} + 4 \sum_{k=0}^n w_k - 4(n+1)w_{n+1} - 2 w_0 + 2 w_{n+1}
\end{equation*}
and finally
\begin{align*}
S_0^H(n) &= \sum_{k=0}^n v_k = 2(n+1)v_{n+1} + (4n+2)w_{n+1} - 4 \sum_{k=0}^n w_k + 2 w_0 \\
&= (2n+1) 2^{-2n} \binom{2n}{n} \left (H_{n+1} + \frac{2n+1}{n+1} - 4 \right ) + 2,
\end{align*}
and the expression for $S_0^H(n)$ follows. Now, suppose that $r\ge 1$. To calculate $S_r^H(n)$ we use summation by parts twice.
We set $Z_k=v_k+w_k$ and $a_k=k(k-1)^r$, and calculate
\begin{align*}
S_r^H(n) &= 2\sum_{k=0}^n k^r(k+1) (Z_k - Z_{k+1}) + 2 \sum_{k=0}^n k^{r+1} (w_k - w_{k+1}) \\
&= 2\sum_{k=0}^n a_{k+1}(Z_k - Z_{k+1}) + 2 \sum_{k=0}^n k^{r+1} (w_k - w_{k+1}) \\
&= 2\sum_{k=0}^n Z_{k} (a_{k+1} - a_{k}) - 2a_{n+1} Z_{n+1} + 2 A_r(n),
\end{align*}
where we have set
\begin{equation*}
A_r(n) = \sum_{k=0}^n k^{r+1} (w_k - w_{k+1}).
\end{equation*}
This gives
\begin{align*}
S_r^H(n) &= (2r+2) \left ( S_r^H(n) + S_r(n) \right ) - n^r (2n+1) 2^{-2n} \binom{2n}{n} (H_{n+1} + 1) + 2A_r(n) \\
& \qquad - 2 \sum_{j=1}^{r-1} \binom{r}{j+1} (-1)^{j+1} \left ( S_{r-j}^H(n) + S_{r-j}(n) \right )
\end{align*}
or
\begin{equation*}
(2r+1) S_r^H(n) = n^r (2n+1) 2^{-2n} \binom{2n}{n} H_{n+1} - S_r(n) - 2A_r(n) + 2 \sum_{j=1}^{r-1} \binom{r}{j+1} (-1)^{j+1} S_{r-j}^H(n).
\end{equation*}
It remains to provide a link between $A_r(n)$ and $S_r(n)$. This is done by applying partial summation a second time.
With $b_k=(k-1)^{r+1}$ we have
\begin{align*}
A_r(n) &= \sum_{k=0}^n b_{k+1} (w_k - w_{k+1}) \\
&= \sum_{k=0}^n w_{k} (b_{k+1} - b_k) - w_{n+1}b_{n+1} + w_0 b_0 \\
&= \sum_{k=0}^n w_{k} \sum_{j=0}^r \binom{r+1}{j} k^j (-1)^{r-j} - w_{n+1}b_{n+1} + w_0 b_0 \\
&= \sum_{j=0}^r \binom{r+1}{j} k^j (-1)^{r-j} S_j(n) - \frac{2n+1}{2(n+1)} 2^{-2n} \binom{2n}{n} n^{r+1} + (-1)^{r+1}
\end{align*}
and the theorem is proved.
\end{proof}

\begin{corollary}
We have
\begin{equation}\label{cb_hn_sum1}
\sum_{k=0}^n 2^{-2k} k \binom{2k}{k} H_k = \frac{1}{3} (2n+1) 2^{-2n} \binom{2n}{n}\Big (n H_n - \frac{2}{3}n + 2\Big ) - \frac{2}{3},
\end{equation}
\begin{equation}\label{cb_hn_sum2}
\sum_{k=0}^n 2^{-2k} k^2 \binom{2k}{k} H_k = \frac{1}{5} (2n+1) 2^{-2n} \binom{2n}{n}\Big (\frac{n}{3}(3n+2)H_n - \frac{2}{45}(9n^2-14n+15)\Big ) + \frac{2}{15},
\end{equation}
and
\begin{align}\label{cb_hn_sum3}
\sum_{k=0}^n 2^{-2k} k^3 \binom{2k}{k} H_k &= \frac{1}{7} (2n+1) 2^{-2n} \binom{2n}{n}\Big (\frac{n}{15}(15n^2+18n+2)H_n \nonumber \\
&\qquad - \frac{2}{1575}(225n^3-297n^2+37n+105)\Big ) + \frac{2}{105}.
\end{align}
\end{corollary}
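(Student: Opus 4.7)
The plan is to imitate the proof of the preceding corollary: specialize the recursion \eqref{main_eq2} at $r=1,2,3$ and substitute the closed forms for $S_r(n)$ given by \eqref{eq1_theorem7_1} and \eqref{cb_sum1}--\eqref{cb_sum3}. Since the first inner sum in \eqref{main_eq2} ranges over $1\le j\le r-1$, for $r=1$ it is empty; for $r=2$ it reintroduces $S_1^H(n)$ (obtained in the previous step); for $r=3$ it brings in both $S_1^H(n)$ and $S_2^H(n)$. The three cases can therefore be peeled off in order, each one reducing to a purely algebraic simplification.

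For $r=1$ the recursion collapses to
\[
3\,S_1^H(n) = n(2n+1)\,2^{-2n}\binom{2n}{n}(H_n+1) - 2 - S_1(n) - 2\sum_{j=0}^{1}\binom{2}{j}(-1)^{1-j} S_j(n),
\]
in which the last sum evaluates to $-2S_0(n)+4S_1(n)$. Substituting $S_0(n)=(2n+1)2^{-2n}\binom{2n}{n}$ and $S_1(n)=\frac{n}{3}(2n+1)2^{-2n}\binom{2n}{n}$, collecting the common factor $(2n+1)2^{-2n}\binom{2n}{n}$, and dividing by $3$ produces \eqref{cb_hn_sum1}. The cases $r=2$ and $r=3$ run identically: expand the two finite sums by the binomial theorem, insert the previously established expressions for $S_{r-j}^H(n)$ and $S_j(n)$, factor out $(2n+1)2^{-2n}\binom{2n}{n}$, and separate the $H_n$ part from the polynomial remainder.

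The main difficulty is not conceptual but algebraic bookkeeping---tracking the signs $(-1)^r$, $(-1)^{j+1}$, $(-1)^{r-j}$, the binomial coefficients $\binom{r}{j+1}$ and $\binom{r+1}{j}$, and combining the coefficients of $H_n$ with the polynomial correction in $n$. As a convenient sanity check, each claimed identity can be evaluated at $n=0$, where the left-hand side vanishes because of the factor $k^r$ with $r\ge 1$; one then confirms that the constants $-\tfrac{2}{3}$, $+\tfrac{2}{15}$, $+\tfrac{2}{105}$ appearing in \eqref{cb_hn_sum1}--\eqref{cb_hn_sum3} are precisely those needed to cancel the value of the leading term at $n=0$.
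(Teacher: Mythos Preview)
Your proposal is correct and follows exactly the paper's approach: specialize the recursion \eqref{main_eq2} at $r=1,2,3$ and insert the closed forms \eqref{eq1_theorem7_1}, \eqref{cb_sum1}--\eqref{cb_sum3}. The one small wording slip---``the last sum evaluates to $-2S_0(n)+4S_1(n)$''---refers to $2\sum_j$ rather than to $\sum_j$ itself, but the subsequent algebra and the $n=0$ sanity check are consistent with the correct value, so this is cosmetic only.
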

\begin{proof}
Set $r=1,2,3$ in \eqref{main_eq2} and combine with \eqref{cb_sum1}-\eqref{cb_sum3}.
\end{proof}

\section{Further observations}

From the two main results derived in the previous section we can also deduce the third main result dealing with sums with even indexed harmonic numbers $S_r^{2H}(n)$ given in \eqref{central_bin_hn_even}. For this class of combinatorial sums the following recursion is true.

\begin{theorem}
We have
\begin{equation}
S_0^{2H}(n) = \sum_{k=0}^n 2^{-2k} \binom{2k}{k} H_{2k} = 2^{-2n} \binom{2n}{n} \left ((2n+1)H_{2n} - (4n+1)\right ) + 1,
\end{equation}
and for $r\geq 1$ the following recursion holds:
\begin{align}\label{main_eq3}
(2r+1) S_r^{2H}(n) &= n^r 2^{-2n} \binom{2n}{n} \left ((2n+1)H_{2n} + n + \frac{3}{2}\right ) + (-1)^r - \frac{3}{2} S_r(n) \nonumber \\
&\qquad + 2\sum_{j=1}^{r-1}\binom{r}{j+1} (-1)^{j+1} S_{r-j}^{2H}(n) - \sum_{j=0}^{r}\binom{r+1}{j} (-1)^{r-j} S_{j}(n),
\end{align}
where $S_r(n)$ is defined in \eqref{central_bin_gen} and given recursively in \eqref{eq2_theorem7_1}. In particular,
\begin{equation}
\sum_{k=0}^n 2^{-2k} k \binom{2k}{k} H_{2k} =
\frac{1}{9} 2^{-2n} \binom{2n}{n} \left (3n (2n+1)  H_{2n} - 4n^2 + 7n + 3 \right ) - \frac{1}{3}.
\end{equation}
\end{theorem}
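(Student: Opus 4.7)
The key observation I would exploit is the classical decomposition
\[
H_{2k} \;=\; \sum_{m=1}^{2k}\frac{1}{m} \;=\; \sum_{m=1}^{k}\frac{1}{2m} + \sum_{m=1}^{k}\frac{1}{2m-1} \;=\; \tfrac{1}{2}H_k + O_k,
\]
which immediately gives the identity
\[
S_r^{2H}(n) \;=\; \tfrac{1}{2}\,S_r^{H}(n) + S_r^{O}(n)
\]
for every $r \geq 0$. Thus the entire theorem should reduce to a linear combination of Theorems~\ref{main_1} and~\ref{main_2}, with no new summation-by-parts required. This makes the proof essentially a verification, which is pleasing but still requires careful bookkeeping.

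For the $r=0$ case, I would substitute the closed forms for $S_0^H(n)$ and $S_0^O(n)$ into $\tfrac{1}{2}S_0^H(n)+S_0^O(n)$ and collect the resulting $\binom{2n}{n}2^{-2n}$ terms. The harmonic part combines via $\tfrac{1}{2}H_n + O_n = H_{2n}$, giving the coefficient $(2n+1)H_{2n}$ in front of $\binom{2n}{n}2^{-2n}$. The remaining constant pieces $\tfrac{1}{2}[(2n+1)(-2)] + (-2n)$ coalesce into $-(4n+1)$, and the additive constants $\tfrac{1}{2}\cdot 2 + 0 = 1$ produce the trailing $+1$ in the statement.

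For $r \geq 1$, I would form $\tfrac{1}{2} \cdot$\eqref{main_eq2} plus \eqref{main_eq1} term by term. The recursive sums combine directly: $\tfrac{1}{2}S_{r-j}^H(n)+S_{r-j}^O(n)=S_{r-j}^{2H}(n)$, which produces the inner sum on the right-hand side of \eqref{main_eq3} with the correct coefficient $2$. The $S_r(n)$ contributions assemble to $-\tfrac{3}{2}S_r(n)$ (namely $-\tfrac{1}{2}$ from Theorem~\ref{main_2} and $-1$ from Theorem~\ref{main_1}), while the double sum $\sum_{j=0}^{r}\binom{r+1}{j}(-1)^{r-j}S_j(n)$ appears only in \eqref{main_eq2} and hence its coefficient halves from $2$ to $1$. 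The boundary term $2(-1)^r$ becomes $(-1)^r$ after the factor $\tfrac{1}{2}$. The leading $n^r 2^{-2n}\binom{2n}{n}$ bracket requires the same harmonic simplification as in the base case, giving $(2n+1)H_{2n}+n+\tfrac{3}{2}$ after combining $\tfrac{1}{2}(2n+1)(H_n+1)+(2n+1)O_n+1$.

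The only real obstacle is arithmetic vigilance in the bracket simplification and in tracking signs on the $(-1)^r$ boundary term; there is no conceptual difficulty since the theorem is truly a corollary of the previous two. For the explicit $r=1$ case at the bottom of the statement, I would plug $r=1$ into \eqref{main_eq3}, use $S_1(n) = \tfrac{n}{3}2^{-2n}(2n+1)\binom{2n}{n}$ from \eqref{cb_sum1}, note that the inner sums with $j\in\{1,\dots,r-1\}$ are empty, and simplify the resulting polynomial in $n$ modulo $\binom{2n}{n}2^{-2n}$; this should produce the stated closed form with the constants $-4n^2+7n+3$ inside the bracket and the trailing $-\tfrac{1}{3}$.
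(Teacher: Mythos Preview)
Your proposal is correct and follows exactly the paper's own argument: the paper's proof consists of nothing more than the identity $O_n = H_{2n} - \tfrac{1}{2}H_n$, the consequence $S_r^{2H}(n) = S_r^O(n) + \tfrac{1}{2}S_r^H(n)$, and the sentence ``the recursive expression is deduced from Theorems~\ref{main_1} and~\ref{main_2}.'' Your write-up is simply a more detailed execution of that same route, with the bracket and constant bookkeeping made explicit.
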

\begin{proof}
The basic connection between $H_n$ and $O_n$ 
$$
O_n = H_{2n} - \frac{1}{2} H_n,
$$
translates immediately to
$$
S_r^{2H}(n) = S_r^{O}(n) + \frac{1}{2} S_r^H(n)
$$
and the recursive expression is deduced from Theorems \ref{main_1} and \ref{main_2}.
\end{proof}

\section{An alternative approach}

In this section, based on a known polynomial identity, we express $S_0^O (n)$ and some related sums in closed form in terms of Stirling numbers and $r$-Stirling numbers of the second kind. \\

Recall that for integers $n$ and $k$ the Stirling numbers of the second kind, denoted by $\braces{n}{k}$, are the coefficients in the expansion
$$
x^n = \sum_{k=0}^n \braces nk (x)_k,
$$
where $(x)_k$ is the falling factorial defined by $(x)_{0}=1,(x)_{n}=x (x-1)\cdots (x-n+1)$ for $n>0$. We will often make use of the property
$$
\braces nk = 0, \text{if $k>n$.}
$$
The exponential generating function of $\braces{n}{k}$ equals
$$
\sum_{n\geq k} \braces{n}{k} \frac{z^{n}}{n!}=\frac{1}{k!} \left( e^{z}-1\right)^{k}.
$$
For any positive integer $r$, the $r$-Stirling numbers of the second kind $\braces{n+r}{k+r}_r$
are defined by the exponential generating function 
$$
\sum_{n\geq k} \braces{n+r}{k+r}_r \frac{z^{n}}{n!}=\frac{1}{k!} e^{rz} \left( e^{z}-1\right)^{k}.
$$
Details about these special numbers can be found in Laissaoui and Rahmani~\cite{laissaoui17} and references therein.

\begin{lemma}\label{stirling2}
If $k$, $r$ and $v$ are non-negative integers, then
\begin{align}
\left. \frac{d^r}{dx^r}\left( 1 - e^x \right)^k \right|_{x = 0} &= \sum_{p = 0}^k (- 1)^p \binom{k}{p} p^r = (- 1)^k k!\braces rk,\label{eq.r33yojo}\\
\left. \frac{d^r}{dx^r}\left( 1 - e^x \right)^k e^{vx} \right|_{x = 0} &= \sum_{p = 0}^k (- 1)^p \binom{k}{p} (v + p)^r 
= (- 1)^k k!\braces{r+v}{k+v}_v  \label{eq.vompp34},
\end{align}
where $\braces mn$ and $\braces{m+r}{n+r}_r$ are, respectively, Stirling numbers of the second kind and $r$-Stirling numbers.  
\end{lemma}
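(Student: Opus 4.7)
The plan is to prove both identities by two essentially independent computations: one using the binomial theorem directly, and one invoking the exponential generating functions for the two Stirling-type sequences listed just before the lemma. Since no more than a few lines are required, the main obstacle is merely to present the computation cleanly.

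First I would establish the middle expressions of \eqref{eq.r33yojo} and \eqref{eq.vompp34}. Expanding via the binomial theorem,
\[
(1-e^x)^k = \sum_{p=0}^k (-1)^p \binom{k}{p} e^{px},
\qquad
(1-e^x)^k e^{vx} = \sum_{p=0}^k (-1)^p \binom{k}{p} e^{(v+p)x}.
\]
Since $\frac{d^r}{dx^r} e^{\alpha x}\big|_{x=0} = \alpha^r$, differentiating $r$ times and setting $x=0$ immediately gives the middle equalities of \eqref{eq.r33yojo} and \eqref{eq.vompp34}.

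Next I would identify the closed forms in terms of Stirling numbers. Writing $(1-e^x)^k = (-1)^k(e^x-1)^k$ and using the generating function stated in the excerpt,
\[
(-1)^k (e^x-1)^k = (-1)^k k! \sum_{n\geq k} \braces{n}{k} \frac{x^n}{n!},
\]
so reading off the coefficient of $x^r/r!$ (equivalently, computing $\frac{d^r}{dx^r}$ at $x=0$) yields $(-1)^k k!\braces{r}{k}$, which is the right-hand side of \eqref{eq.r33yojo}. For \eqref{eq.vompp34} the same argument applies to
\[
e^{vx}(1-e^x)^k = (-1)^k e^{vx}(e^x-1)^k = (-1)^k k! \sum_{n\geq k} \braces{n+v}{k+v}_v \frac{x^n}{n!},
\]
where the second equality uses the exponential generating function for $r$-Stirling numbers stated in the excerpt. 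Extracting the coefficient of $x^r/r!$ gives $(-1)^k k! \braces{r+v}{k+v}_v$, completing the proof.
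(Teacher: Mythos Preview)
Your proof is correct and follows essentially the same route as the paper: both expand $(1-e^x)^k e^{vx}$ by the binomial theorem, differentiate $r$ times, and evaluate at $x=0$ to obtain the middle expressions. Your version is in fact slightly more complete, since you also spell out the identification with the Stirling and $r$-Stirling closed forms by reading off Taylor coefficients from the stated exponential generating functions, whereas the paper's proof only writes down the binomial step and leaves the Stirling-number equalities implicit.
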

\begin{proof}
Since
\begin{equation*}
\left( {1 - e^x } \right)^k e^{vx} = \sum_{p = 0}^k (- 1)^p \binom{{k}}{p}e^{(v + p)x};
\end{equation*}
we have
\begin{equation*}
\frac{d^r}{dx^r}\left( \left( {1 - e^x } \right)^k e^{vx} \right) = \sum_{p = 0}^k (- 1)^p \binom{{k}}{p}\left(v + p \right)^r e^{(v + p)x};
\end{equation*}
and hence~\eqref{eq.vompp34}.
\end{proof}

\begin{theorem}\label{thm.id}
If $n$ and $r$ are non-negative integers and $t$ is not an integer, then
\begin{equation}\label{id}
\sum_{k = 0}^n (- 1)^k k^r \binom{t}{k} (H_t - H_{t - k}) = \sum_{k = 0}^r (-1)^k k! \braces{r+n-k}n_{n-k} \binom{n - t}{n - k} (H_{k - t} - H_{n - t}).
\end{equation}
\end{theorem}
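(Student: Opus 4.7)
The plan is to derive Theorem~\ref{thm.id} from a polynomial identity in $t$ via differentiation. Observe that, viewed as polynomials in $t$,
\[
\frac{d}{dt}\binom{t}{k} = \binom{t}{k}(H_t - H_{t-k}), \qquad \frac{d}{dt}\binom{n-t}{n-k} = \binom{n-t}{n-k}(H_{k-t} - H_{n-t}),
\]
where $H_s = \psi(s+1)+\gamma$ denotes the generalized harmonic number, finite as long as $s$ is not a negative integer (this is the reason for the hypothesis that $t$ is not an integer). Consequently, identity~\eqref{id} is the termwise $t$-derivative of the \emph{polynomial} identity
\[
\sum_{k = 0}^n (- 1)^k k^r \binom{t}{k} = \sum_{k = 0}^r (-1)^k k!\,\braces{r+n-k}{n}_{n-k} \binom{n - t}{n - k},
\]
so the whole task reduces to establishing this polynomial identity.

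To prove it, I would begin from the auxiliary $y$-polynomial identity
\[
\sum_{k=0}^n \binom{t}{k}\, y^k \;=\; \sum_{j=0}^n (-1)^j \binom{n-t}{j}\, y^j\, (1+y)^{n-j},
\]
which follows from the formal power-series manipulation
\[
\sum_{j\ge 0}(-1)^j \binom{n-t}{j}\, y^j\, (1+y)^{n-j} \;=\; (1+y)^n \left(1 - \frac{y}{1+y}\right)^{n-t} \;=\; (1+y)^t
\]
upon extracting the coefficient of $y^k$ for $0\le k\le n$ on both sides. Substituting $y = -e^x$ recasts this as
\[
\sum_{k=0}^n (-1)^k \binom{t}{k}\, e^{kx} \;=\; \sum_{j=0}^n \binom{n-t}{j}\, e^{jx}\, (1-e^x)^{n-j}.
\]
Differentiating $r$ times in $x$ and evaluating at $x=0$ produces $\sum_{k=0}^n (-1)^k k^r \binom{t}{k}$ on the left, while Lemma~\ref{stirling2} (applied with $k\leftarrow n-j$ and $v = j$) evaluates each derivative on the right to $(-1)^{n-j}(n-j)!\braces{r+j}{n}_j$. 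Reindexing via $k = n-j$ and using the vanishing of $\braces{r+n-k}{n}_{n-k}$ for $k > r$ to truncate the sum yields the polynomial identity, and differentiation in $t$ then delivers~\eqref{id}.

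I expect the main technical care to lie in tracking parameters through the substitution $y = -e^x$ and the application of Lemma~\ref{stirling2}: the surviving factor $e^{jx}$ must be identified with the shift parameter $v$ of the lemma, and the subsequent re-indexing $k = n-j$ has to cooperate with the vanishing pattern of the $r$-Stirling numbers. A secondary point is the case $r > n$, where $\binom{n-t}{n-k}$ for $k > n$ must be read as zero (the standard convention for negative lower arguments); with this convention both the polynomial identity and~\eqref{id} remain finite sums effectively ranging over $0 \le k \le \min(r,n)$.
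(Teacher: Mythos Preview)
Your proposal is correct and follows essentially the same route as the paper: both start from the Gould-type polynomial identity, substitute an exponential for the variable, differentiate $r$ times, evaluate at $0$ via Lemma~\ref{stirling2} to obtain the $r$-Stirling closed form for $\sum_{k=0}^n(-1)^k k^r\binom{t}{k}$, and then differentiate in $t$ to produce the harmonic factors. The only cosmetic difference is that the paper quotes Gould's identity~\eqref{gould110} directly and substitutes $x\mapsto e^x$, whereas you rederive the equivalent form in $y$ from $(1+y)^t$ and substitute $y=-e^x$; your extra remarks on the $r>n$ truncation are a helpful clarification but do not change the argument.
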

\begin{proof}
Write $\exp (x)$ for $x$ in the known polynomial identity (variation on Gould~\cite[Identity (1.10),p.2]{Gould}):
\begin{equation}\label{gould110}
\sum_{k = 0}^n (- 1)^k \binom{{t}}{k} x^k = \sum_{k = 0}^n \binom{{n - t}}{k} \left( {1 - x} \right)^{n - k} x^k,
\end{equation}
and differentiate $r$ times with respect to $x$ to obtain
\begin{align*}
\sum_{k = 0}^n (- 1)^k k^r \binom{{t}}{k} e^{k x} &= \sum_{k = 0}^n \frac{d^r}{dx^r}\left( \left( 1 - e^x \right)^{n - k} e^{k x} \right)\binom{{n - t}}{k} \\
&= \sum_{k = 0}^n \frac{d^r}{dx^r} \left( \left( 1 - e^x \right)^k e^{(n-k)x} \right) \binom{{n - t}}{n-k}.
\end{align*}
Evaluation at $x=0$ yields
\begin{align*}
\sum_{k = 0}^n {( - 1)^k k^r \binom{{t}}{k} }  &= \left. {\sum_{k = 0}^n {\frac{{d^r }}{{dx^r }}\left( {\left( {1 - e^x } \right)^k e^{(n-k)x} } \right)\binom{{n - t}}{n-k}} } \right|_{x = 0}\\
 &= \sum_{k = 0}^n {\left. {\frac{{d^r }}{{dx^r }}\left( {\left( {1 - e^x } \right)^k e^{(n-k)x} } \right)} \right|_{x = 0} \binom{{n - t}}{n-k}}\\
 &= \sum_{k = 0}^r {(-1)^kk!\braces{r+n-k}n_{n-k} \binom{{n - t}}{n-k}},
\end{align*}
which gives~\eqref{id} upon differentiating with respect to $t$, since
\begin{equation*}
\frac{d}{{dt}}\binom{{t}}{k} = \binom{{t}}{k}\left( {H_t  - H_{t - k} } \right)
\end{equation*}
and
\begin{equation*}
\frac{d}{{dt}}\binom{{n - t}}{k} = \binom{{n - t}}{k}\left( {H_{n - t - k}  - H_{n - t} } \right).
\end{equation*}
\end{proof}

The four lowest explicit formulas for $r=0,1,2,3$ evaluations of~\eqref{id} are presented in Proposition~\ref{prop.m30jt31}.
\begin{proposition}\label{prop.m30jt31}
If $n$ is a non-negative integer and $t$ is not an integer, then
\begin{equation}
\sum_{k = 0}^n (- 1)^k \binom{{t}}{k}\left(H_t - H_{t - k} \right) = \binom{{n - t}}{n} \left(H_{ - t} - H_{n - t} \right),
\end{equation}
\begin{equation}
\sum_{k = 0}^n (- 1)^k k \binom{t}{k} \left(H_t - H_{t - k} \right) = -\binom{n - t}{n - 1}\left(H_{1 - t} - H_{n - t} \right) 
+ n\binom{n - t}{n} \left(H_{- t} - H_{n - t} \right),
\end{equation}
\begin{align}
\sum_{k = 0}^n (- 1)^k k^2 \binom{{t}}{k}\left( {H_t - H_{t - k} } \right) & = 2\binom{{n - t}}{{n - 2}}\left( {H_{2 - t} - H_{n - t} } \right)\nonumber\\
&\qquad - \left( {2n - 1} \right)\binom{{n - t}}{{n - 1}}\left( {H_{1 - t} - H_{n - t} } \right)\nonumber\\
&\qquad\quad + n^2 \binom{{n - t}}{n}\left( {H_{- t} - H_{n - t} } \right),
\end{align}
and
\begin{align}
\sum_{k = 0}^n (- 1)^k k^3 \binom{{t}}{k}\left( {H_t - H_{t - k} } \right) &=  - 6\binom{{n - t}}{{n - 3}}\left( {H_{3 - t} - H_{n - t} } \right)\nonumber\\
&\qquad + 6\left( {n - 1} \right)\binom{{n - t}}{{n - 2}}\left( {H_{2 - t} - H_{n - t} } \right)\nonumber\\
&\qquad\quad - \left( {3n^2 - 3n + 1} \right)\binom{{n - t}}{{n - 1}}\left( {H_{1 - t} - H_{n - t} } \right)\nonumber\\
&\qquad\qquad + n^3 \binom{{n - t}}{n}\left( {H_{ - t} - H_{n - t} } \right).
\end{align}
\end{proposition}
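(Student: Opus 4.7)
The plan is to specialize Theorem~\ref{thm.id} to $r=0,1,2,3$ in turn, and for each value simply identify the four (or fewer) coefficients $(-1)^k k!\braces{r+n-k}{n}_{n-k}$ that appear on the right-hand side. In other words, once the $r$-Stirling factors are computed, the four formulas in Proposition~\ref{prop.m30jt31} will read off Theorem~\ref{thm.id} essentially by inspection, since the binomial and harmonic-number factors $\binom{n-t}{n-k}(H_{k-t}-H_{n-t})$ are already exactly those appearing in the statement.

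The key computational step is to evaluate $\braces{r+n-k}{n}_{n-k}$ for small $r$. I would do this using equation~\eqref{eq.vompp34} of Lemma~\ref{stirling2} with the dictionary $v=n-k$: then
\begin{equation*}
\braces{r+n-k}{n}_{n-k} = \frac{(-1)^k}{k!}\sum_{p=0}^k (-1)^p\binom{k}{p}(n-k+p)^r,
\end{equation*}
so the required quantity is (up to sign) a $k$-th forward difference of the polynomial $x\mapsto x^r$ evaluated at $x=n-k$. Concretely: for $k=0$ one gets $n^r$; for $k=1$ one gets $n^r-(n-1)^r$, which equals $1$ for $r=1$, $2n-1$ for $r=2$, and $3n^2-3n+1$ for $r=3$; for $k=2$ the second difference of $x^r$ gives $0,\,2,\,6(n-1)$ for $r=1,2,3$; and for $k=3$ the third difference of $x^3$ is $6$. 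Multiplying by $(-1)^k k!$ and inserting the sign $(-1)^k$ from the outer sum produces precisely the integer coefficients $1,\;\pm(2n-1),\;n,\;2,\;6,\;-(3n^2-3n+1),\;6(n-1),\;n^2,\;n^3,\ldots$ that are listed in each of the four displays of the proposition.

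After assembling, each of the four identities is obtained by reading off Theorem~\ref{thm.id}: for $r=0$ only the $k=0$ term survives (with coefficient $1$); for $r=1$ one gets the $k=0,1$ terms with coefficients $n$ and $-1$; for $r=2$ the $k=0,1,2$ terms with coefficients $n^2$, $-(2n-1)$, $2$; and for $r=3$ the $k=0,1,2,3$ terms with coefficients $n^3$, $-(3n^2-3n+1)$, $6(n-1)$, $-6$. All signs and factorials come out correctly because the Stirling factor already carries the sign $(-1)^k$ and the factor $k!$, so these cancel cleanly against the $(-1)^k k!$ prefactor in the sum.

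The main (mild) obstacle is the $r=3$ case, where one must compute a second and third forward difference of $x^3$ without arithmetic slip; everything else is cosmetic, amounting to indexing the sum correctly and recognizing that for $r<n$ the upper limit on the right side of~\eqref{id} is $r$ rather than $n$, so the sum truncates and exactly $r+1$ terms survive.
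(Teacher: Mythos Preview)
Your proposal is correct and is exactly the approach the paper intends: Proposition~\ref{prop.m30jt31} is stated in the paper simply as ``the four lowest explicit formulas for $r=0,1,2,3$ evaluations of~\eqref{id}'', with no further proof given, so your specialization of Theorem~\ref{thm.id} together with the evaluation of the $r$-Stirling coefficients via Lemma~\ref{stirling2} is precisely what is required. Your observation that $(-1)^k k!\braces{r+n-k}{n}_{n-k}=\sum_{p=0}^k(-1)^p\binom{k}{p}(n-k+p)^r$ reduces the coefficient computation to iterated finite differences of $x^r$, and the numerical values you obtain match the proposition; the only cosmetic point is that the upper limit in~\eqref{id} is already $r$, so no separate truncation argument is needed.
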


Theorem \ref{thm.id} also provides a closed form for the sums $S_r^O(n)$.

\begin{proposition}
If $n$ and $r$ are non-negative integers, then
\begin{align}
S_r^O(n) &= \sum_{k=0}^n 2^{-2k} k^r \binom{2k}{k} O_k \nonumber \\
&= 2^{-2n} \sum_{k = 0}^r (-1)^k k!\braces{r+n-k}n_{n-k} 2^{2k} \binom{2n + 1}{2k + 1} \binom{2(n - k)}{n - k} \binom{n}{k}^{- 1} (O_{n + 1} - O_{k + 1}).
\end{align}
In particular,
\begin{align*}
S_0^O (n) &= \sum_{k = 0}^n 2^{- 2k} \binom{2k}{k} O_k = (2n + 1) \binom{2n}{n} 2^{- 2n} \left( O_{n + 1} - 1 \right), \\
S_1^O (n) &= \sum_{k = 0}^n 2^{- 2k} k \binom{2k}{k} O_k = \frac{n (2n + 1)}{3} \binom{2n}{n} 2^{- 2n} \left( O_{n + 1} - \frac{1}{3} \right).
\end{align*}
\end{proposition}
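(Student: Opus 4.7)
The plan is to specialize Theorem \ref{thm.id} to $t = -1/2$ and convert every ingredient into its central-binomial / odd-harmonic form. The guiding observation is the classical identity
\[
\binom{-1/2}{k} = \frac{(-1)^k}{2^{2k}} \binom{2k}{k},
\]
so that $(-1)^k \binom{t}{k}$ at $t=-1/2$ becomes exactly the weight $2^{-2k}\binom{2k}{k}$ appearing in $S_r^O(n)$.

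I would first treat the harmonic differences that appear on both sides. Using $H_t - H_{t-k} = \sum_{j=0}^{k-1} \tfrac{1}{t-j}$ (valid for non-integer $t$), the choice $t=-1/2$ gives $H_{-1/2} - H_{-1/2-k} = -2\sum_{j=0}^{k-1}\tfrac{1}{2j+1} = -2O_k$, and similarly $H_{k+1/2} - H_{n+1/2} = -2(O_{n+1}-O_{k+1})$. Substituting these into the left-hand side of \eqref{id} collapses it to $-2\,S_r^O(n)$, which is precisely the object we want a closed form for.

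Next I would simplify the binomial coefficient on the right-hand side, namely $\binom{n-t}{n-k} = \binom{n+1/2}{n-k}$. Writing it out as a product and matching double factorials (or equivalently using $\Gamma(m+1/2) = \frac{(2m)!\sqrt\pi}{4^m m!}$) yields
\[
\binom{n+1/2}{n-k} = 2^{-2(n-k)} \binom{2n+1}{2k+1} \binom{2(n-k)}{n-k} \binom{n}{k}^{-1}.
\]
Combining this identity with the two harmonic evaluations above, all the $-2$ factors and the $2^{-2(n-k)}$ factor reassemble into the announced prefactor $2^{-2n}\cdot 2^{2k}$, and dividing by $-2$ on both sides gives the stated closed form.

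Finally I would specialize to $r=0$ and $r=1$, which requires only $\braces{n}{n}_n = 1$, $\braces{n+1}{n}_n = n$, and $\braces{n}{n}_{n-1} = 1$ from the exponential generating function in Lemma \ref{stirling2}; a short simplification (and using $O_1 = 1$) recovers the $S_0^O(n)$ and $S_1^O(n)$ formulas and matches the recursive expressions \eqref{cb_ohn_sum1_mod}. The only mildly delicate step is the binomial simplification of $\binom{n+1/2}{n-k}$—there are several equivalent forms and one must choose the one with $\binom{n}{k}^{-1}$ to align cleanly with the statement; beyond that bookkeeping the argument is essentially a one-line substitution into Theorem \ref{thm.id}.
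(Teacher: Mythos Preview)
Your proposal is correct and follows essentially the same route as the paper: specialize Theorem~\ref{thm.id} at $t=-1/2$, use $\binom{-1/2}{k}=(-1)^k 2^{-2k}\binom{2k}{k}$, rewrite $\binom{n+1/2}{n-k}$ in the stated central-binomial form, and convert the harmonic differences to odd harmonic numbers. The only cosmetic difference is that the paper packages the harmonic step via $H_{h-1/2}=2O_h-2\ln 2$ (the $\ln 2$'s cancel in the differences), whereas you compute $H_{-1/2}-H_{-1/2-k}=-2O_k$ directly from the telescoping sum; these are the same computation.
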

\begin{proof}
Set $t=-1/2$ in~\eqref{id}. Use
\begin{equation*}
\binom{-1/2}{p} = (-1)^p \binom{2p}{p} 2^{-2p}
\end{equation*}
and
\begin{equation*}
\binom{p + 1/2}{q} = 2^{- 2q} \binom{2p + 1}{2q} \binom{2q}{q} \binom{p}{q}^{- 1},
\end{equation*}
and the fact that
\begin{equation*}
H_{h-1/2} = 2O_h -2\ln 2.
\end{equation*}
\end{proof}

\begin{proposition}
If $n$ and $r$ are non-negative integers, then
\begin{align}
&\sum_{k = 1}^n \frac{2^{- 2k}}{2k - 1} k^r \binom{2k}{k} (1 - O_{k - 1})\\
&\qquad = 2^{- 2n} \binom{2n}{n} \sum_{k = 0}^r (-1)^kk!\braces{r+n-k}n_{n-k}\binom{n}{k} 2^{2k} \binom{2k}{k}^{- 1} (O_n - O_k).
\end{align}
In particular,
\begin{equation}\label{Riordan2}
\sum_{k = 1}^n \frac{2^{- 2k}}{2k - 1} \binom{2k}{k} (1 - O_{k - 1}) = 2^{- 2n} \binom{2n}{n} O_n.
\end{equation}
\end{proposition}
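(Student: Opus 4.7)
The plan is to follow the strategy used in the preceding proposition and specialize Theorem~\ref{thm.id} at $t = 1/2$. First I would establish three auxiliary identities needed to translate this substitution into the claimed form. Expanding the falling factorial $(1/2)_k$ and comparing with the usual closed form of $\binom{2k}{k}$ gives, for $k \geq 1$,
\[
(-1)^k \binom{1/2}{k} = -\frac{2^{-2k}}{2k - 1}\binom{2k}{k},
\]
so the coefficient on the left-hand side of \eqref{id} produces exactly $-2\cdot 2^{-2k} k^r \binom{2k}{k}/(2k-1)$ once the harmonic factor below is inserted. Using the telescope $H_t - H_{t-k} = \sum_{j=1}^{k} 1/(t - k + j)$ at $t = 1/2$ and reindexing $i = k - j + 1$, I obtain
\[
H_{1/2} - H_{1/2 - k} = 2\bigl(1 - O_{k-1}\bigr), \qquad k \geq 1,
\]
which supplies the factor $(1 - O_{k-1})$; the $k=0$ contribution in \eqref{id} is absent because $H_{1/2} - H_{1/2} = 0$.

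On the right-hand side of \eqref{id}, the harmonic difference is handled via $H_{h - 1/2} = 2 O_h - 2\ln 2$ (already used in the previous proposition), which yields
\[
H_{k - 1/2} - H_{n - 1/2} = -2(O_n - O_k),
\]
supplying the desired $(O_n - O_k)$ with the right sign. The remaining ingredient is a closed form for $\binom{n - 1/2}{n - k}$. Expanding
\[
\binom{n - 1/2}{n - k} = \frac{(n - 1/2)(n - 3/2)\cdots(k + 1/2)}{(n-k)!}
\]
and using $(2n-1)(2n-3)\cdots(2k+1) = (2n)!\,2^k\,k!/\bigl((2k)!\,2^n\,n!\bigr)$ gives
\[
\binom{n - 1/2}{n - k} = 2^{2k - 2n}\binom{2n}{n}\binom{n}{k}\binom{2k}{k}^{-1}.
\]

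Substituting the four pieces into \eqref{id} with $t = 1/2$ and dividing by $-2$ yields the claimed identity. For the particular case $r = 0$, only the $k = 0$ summand on the right survives: $\braces{n}{n}_n = 1$ (the unique partition of $\{1,\ldots,n\}$ into $n$ singletons, equivalently from the generating function $e^{nz}$ at its constant term), $\binom{n}{0} = \binom{0}{0}^{-1} = 1$, and $O_n - O_0 = O_n$, so the right-hand side collapses to $2^{-2n}\binom{2n}{n}\,O_n$, which is \eqref{Riordan2}. The main obstacle is the bookkeeping for $\binom{n - 1/2}{n - k}$: the neat formula $\binom{p + 1/2}{q} = 2^{-2q}\binom{2p+1}{2q}\binom{2q}{q}\binom{p}{q}^{-1}$ used in the previous proposition (with $t = -1/2$) fails once $q$ exceeds $p$, which happens here already at $k = 0$ with $p = n-1$ and $q = n$, so the half-integer binomial must be rederived directly rather than quoted.
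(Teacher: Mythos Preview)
Your proof is correct and follows essentially the same route as the paper: specialize Theorem~\ref{thm.id} at $t=1/2$, use the half-integer binomial evaluations $\binom{1/2}{k}=\frac{(-1)^{k+1}2^{-2k}}{2k-1}\binom{2k}{k}$ and $\binom{n-1/2}{n-k}=2^{2k-2n}\binom{2n}{n}\binom{n}{k}\binom{2k}{k}^{-1}$, and translate the harmonic differences via $H_{1/2-k}=2O_{k-1}-2\ln 2$ and $H_{h-1/2}=2O_h-2\ln 2$. Your extra remark that the formula $\binom{p+1/2}{q}=2^{-2q}\binom{2p+1}{2q}\binom{2q}{q}\binom{p}{q}^{-1}$ from the previous proposition cannot be reused here (since $\binom{n-1}{n}^{-1}$ is singular at $k=0$) is a helpful observation, though the paper simply quotes the needed formula for $\binom{n-1/2}{n-k}$ directly without comment.
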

\begin{proof}
Set $t=1/2$ in~\eqref{id}. Use
\begin{equation*}
\binom{{1/2}}{k} = \frac{(- 1)^{k + 1} 2^{- 2k}}{2k - 1} \binom{2k}{k}
\end{equation*}
and
\begin{equation*}
\binom{n - 1/2}{n - k} = 2^{2k - 2n} \binom{2n}{n} \binom{2k}{k}^{- 1} \binom{n}{k},
\end{equation*}
and the fact that $H_{1/2-k}=2O_{k-1}-2\ln 2$.
\end{proof}

Combining \eqref{Riordan2} with \eqref{Riordan1} we get for $n\geq 1$:
\begin{corollary}
If $n$ is a non-negative integer, then
\begin{equation}
\sum_{k=1}^n \frac{2^{-2k}}{2k-1} \binom{2k}{k} O_{k-1} = 1 - 2^{-2n} \binom{2n}{n} (O_n + 1).
\end{equation}
\end{corollary}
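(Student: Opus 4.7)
The plan is to read the corollary exactly as the text hints: it is a one-line algebraic consequence of combining identities \eqref{Riordan1} and \eqref{Riordan2}, obtained by splitting the factor $(1-O_{k-1})$ appearing in \eqref{Riordan2} into its two pieces.

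First I would isolate the $k=0$ term in \eqref{Riordan1}, which contributes $1$ to the left-hand side, and move it to the right. Rewriting $\frac{1}{1-2k}=-\frac{1}{2k-1}$ for $k\geq 1$ then gives the auxiliary identity
\begin{equation*}
\sum_{k=1}^n \frac{2^{-2k}}{2k-1}\binom{2k}{k} = 1 - 2^{-2n}\binom{2n}{n}.
\end{equation*}

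Next I would expand \eqref{Riordan2} by distributing $(1-O_{k-1})$, obtaining
\begin{equation*}
\sum_{k=1}^n \frac{2^{-2k}}{2k-1}\binom{2k}{k} - \sum_{k=1}^n \frac{2^{-2k}}{2k-1}\binom{2k}{k} O_{k-1} = 2^{-2n}\binom{2n}{n} O_n.
\end{equation*}
Substituting the auxiliary identity into the first sum and solving for the remaining sum immediately yields
\begin{equation*}
\sum_{k=1}^n \frac{2^{-2k}}{2k-1}\binom{2k}{k} O_{k-1} = 1 - 2^{-2n}\binom{2n}{n}(O_n + 1),
\end{equation*}
which is the claim.

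There is no real obstacle; the only mild point worth mentioning explicitly is the handling of the $k=0$ boundary term in \eqref{Riordan1}, since the right-hand sum in the corollary starts at $k=1$ while \eqref{Riordan1} starts at $k=0$. Once that initial term is accounted for and the sign convention $\frac{1}{1-2k}=-\frac{1}{2k-1}$ is applied, the corollary follows by a single subtraction. The statement is valid for all $n\geq 0$ with the usual empty-sum convention when $n=0$, both sides then reducing to $0$.
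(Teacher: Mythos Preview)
Your proof is correct and follows exactly the approach indicated in the paper: the corollary is obtained by combining \eqref{Riordan1} with \eqref{Riordan2}, splitting the factor $(1-O_{k-1})$ and subtracting. Your explicit handling of the $k=0$ term and the $n=0$ boundary case is a welcome clarification that the paper leaves implicit.
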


\begin{theorem}
If $n$ and $r$ are non-negative integers and $t$ is not an integer, then
\begin{align}\label{eq.zha71dp}
\sum_{k = 0}^n {( - 1)^k \binom{{t}}{{n - k}}k^r \left( {H_t  - H_{t - n + k} } \right)}  = ( - 1)^n \sum_{k = 0}^r {k!\braces{r}{k}\binom{{n - t}}{{n - k}}\left( {H_{k - t}  - H_{n - t} } \right)} .
\end{align}

\end{theorem}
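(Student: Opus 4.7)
The plan is to mirror the proof of Theorem \ref{thm.id}, but to first replace Gould's identity \eqref{gould110} with a companion identity in which the power $x^k$ is paired with the binomial $\binom{t}{n-k}$ rather than $\binom{t}{k}$. Substituting $x\mapsto 1/x$ in \eqref{gould110} and multiplying through by $x^n$ to clear denominators yields the polynomial identity
$$
\sum_{k=0}^n (-1)^k \binom{t}{n-k} x^k = (-1)^n \sum_{j=0}^n \binom{n-t}{j}(x-1)^{n-j},
$$
which can also be seen directly by reindexing the sum on the left via $k\mapsto n-k$ and applying \eqref{gould110}.

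From this point the argument follows the template of Theorem \ref{thm.id}. I would write $e^x$ for $x$, differentiate $r$ times with respect to $x$, and evaluate at $x=0$. The left side collapses to $\sum_{k=0}^n (-1)^k k^r \binom{t}{n-k}$. For the right side, the identity $(e^x-1)^{n-j} = (-1)^{n-j}(1-e^x)^{n-j}$ together with \eqref{eq.r33yojo} gives
$$
\left.\frac{d^r}{dx^r}(e^x-1)^{n-j}\right|_{x=0} = (n-j)!\braces{r}{n-j}.
$$
Reindexing $m = n-j$ and exploiting the vanishing $\braces{r}{m}=0$ for $m>r$ compresses the sum to $m=0,\dots,r$, producing the intermediate identity
$$
\sum_{k=0}^n (-1)^k k^r \binom{t}{n-k} = (-1)^n \sum_{m=0}^r m!\braces{r}{m}\binom{n-t}{n-m}.
$$

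Finally, I would differentiate both sides with respect to $t$, using the same formulas $\frac{d}{dt}\binom{t}{n-k} = \binom{t}{n-k}(H_t - H_{t-n+k})$ and $\frac{d}{dt}\binom{n-t}{n-m} = \binom{n-t}{n-m}(H_{m-t} - H_{n-t})$ that appeared in the proof of Theorem \ref{thm.id}. Relabeling the summation index $m\mapsto k$ on the right then delivers \eqref{eq.zha71dp}. The only step requiring genuine thought is deriving the companion polynomial identity at the very beginning; after that, each subsequent move is a faithful echo of the proof of Theorem \ref{thm.id}, modulo sign bookkeeping from the $(x-1)^{n-j}$ versus $(1-x)^{n-k}$ discrepancy.
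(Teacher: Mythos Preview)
Your proposal is correct and follows essentially the same approach as the paper: the companion identity you derive by substituting $x\mapsto 1/x$ in \eqref{gould110} and clearing denominators is precisely the paper's identity \eqref{eq.zzq9wz8} after reindexing $j\mapsto n-k$ and rewriting $(x-1)^{n-j}=(-1)^{n-j}(1-x)^{n-j}$. The remaining steps---substituting $e^x$, differentiating $r$ times, evaluating at $x=0$ via Lemma~\ref{stirling2}, and then differentiating in $t$---match the paper's proof exactly.
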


\begin{proof}
Write $1/x$ for $x$ in~\eqref{gould110} to obtain
\begin{equation}\label{eq.zzq9wz8}
\sum_{k = 0}^n {( - 1)^k \binom{{t}}{{n - k}}x^k }  = \sum_{k = 0}^n {( - 1)^{n - k} \binom{{n - t}}{{n - k}}\left( {1 - x} \right)^k } .
\end{equation}
Now write $\exp x$ for $x$, differentiate $r$ times with respect to $x$ and evaluate at $x=0$ using Lemma~\ref{stirling2}. Finally differentiate the resulting expression with respect to $t$ using
\begin{equation}
\frac{d}{{dt}}\binom{{t}}{{n - k}} = \binom{{t}}{{n - k}}\left( {H_t  - H_{t - n + k} } \right)
\end{equation}
and
\begin{equation}
\frac{d}{{dt}}\binom{{n - t}}{{n - k}} = \binom{{n - t}}{{n - k}}\left( {H_{k - t}  - H_{n - t} } \right).
\end{equation}
\end{proof}

\begin{proposition}
If $n$ and $r$ are non-negative integers, then
\begin{align}
&\sum_{k = 0}^n {2^{2k} \binom{{2\left( {n - k} \right)}}{{n - k}}k^r O_{n - k} }\nonumber\\
&\qquad  = \left( {2n + 1} \right)\binom{{2n}}{n}\sum_{k = 0}^r {\frac{{2^{2k} }}{{2k + 1}}k!\braces{r}{k}\binom{{n}}{k}\binom{{2k}}{k}^{ - 1} \left( {O_{n + 1}  - O_{k + 1} } \right)}. 
\end{align}
\end{proposition}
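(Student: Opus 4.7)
The proof goes through essentially the same specialization as in the earlier proposition that concluded with the Riordan-type identity: take $t=-1/2$ in the identity \eqref{eq.zha71dp}, then convert every quantity into central binomial coefficients and odd harmonic numbers.

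First I would handle the left-hand side of \eqref{eq.zha71dp}. Using
\begin{equation*}
\binom{-1/2}{n-k}=(-1)^{n-k}\binom{2(n-k)}{n-k}2^{-2(n-k)}
\end{equation*}
and, for any integer $m\geq 0$, the telescoping
\begin{equation*}
H_{-1/2}-H_{-1/2-m}=\sum_{i=0}^{m-1}\frac{1}{-1/2-i}=-2\sum_{j=1}^{m}\frac{1}{2j-1}=-2O_{m},
\end{equation*}
with $m=n-k$, the LHS reduces cleanly to $(-1)^{n+1}\cdot 2\cdot 2^{-2n}\sum_{k=0}^n 2^{2k}\binom{2(n-k)}{n-k}k^r O_{n-k}$, which is, up to a common factor, the sum we wish to evaluate. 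Notice in particular that the $-2\ln 2$ pieces in the relation $H_{h-1/2}=2O_h-2\ln 2$ drop out, so no analytic subtleties with non-integer arguments remain.

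Next I would treat the right-hand side. The same relation gives $H_{k+1/2}-H_{n+1/2}=-2(O_{n+1}-O_{k+1})$, and the identity
\begin{equation*}
\binom{p+1/2}{q}=2^{-2q}\binom{2p+1}{2q}\binom{2q}{q}\binom{p}{q}^{-1}
\end{equation*}
applied with $p=n$, $q=n-k$, together with the symmetries $\binom{2n+1}{2(n-k)}=\binom{2n+1}{2k+1}$ and $\binom{n}{n-k}=\binom{n}{k}$, rewrites $\binom{n+1/2}{n-k}$ in terms of $\binom{2n+1}{2k+1}$, $\binom{2(n-k)}{n-k}$, and $\binom{n}{k}^{-1}$. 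The key algebraic step is then the factorial identity
\begin{equation*}
\binom{2n+1}{2k+1}\binom{2(n-k)}{n-k}\binom{n}{k}^{-1}=\frac{(2n+1)\binom{2n}{n}}{2k+1}\binom{n}{k}\binom{2k}{k}^{-1},
\end{equation*}
which one verifies by writing both sides as $(2n+1)!\,k!/\bigl((2k+1)!\,n!\,(n-k)!\bigr)$.

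Substituting these simplifications into \eqref{eq.zha71dp} produces
\begin{equation*}
(-1)^{n+1}\cdot 2\cdot 2^{-2n}\sum_{k=0}^n 2^{2k}\binom{2(n-k)}{n-k}k^r O_{n-k}
\end{equation*}
on the left and the same overall prefactor multiplying the desired right-hand side, so cancelling $(-1)^{n+1}\cdot 2\cdot 2^{-2n}$ yields the proposition. No step is genuinely hard; the only mild obstacle is bookkeeping the signs $(-1)^n,(-1)^{n-k}$ from $\binom{-1/2}{n-k}$ and making sure the $-2\ln 2$ constants cancel on both sides, together with the factorial identity above which just needs to be checked once.
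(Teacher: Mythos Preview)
Your proof is correct and follows exactly the paper's approach: set $t=-1/2$ in \eqref{eq.zha71dp} and convert all binomial coefficients and harmonic-number differences into central binomials and odd harmonic numbers. The only cosmetic difference is that the paper quotes the simplified form $\binom{n+1/2}{n-k}=\frac{2n+1}{2k+1}\binom{2n}{n}\binom{2k}{k}^{-1}2^{-2(n-k)}\binom{n}{k}$ directly, whereas you obtain it by first applying the earlier formula $\binom{p+1/2}{q}=2^{-2q}\binom{2p+1}{2q}\binom{2q}{q}\binom{p}{q}^{-1}$ and then verifying the factorial identity; these are the same computation.
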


\begin{proof}
Set $t=-1/2$ in~\eqref{eq.zha71dp}. Use
\begin{align*}
\binom{{ - 1/2}}{{n - k}} = ( - 1)^{n - k} \binom{{2\left( {n - k} \right)}}{{n - k}}2^{ - 2\left( {n - k} \right)} ,\\
\binom{{n + 1/2}}{{n - k}} = \frac{{2n + 1}}{{2k + 1}}\binom{{2n}}{n}\binom{{2k}}{k}^{ - 1} 2^{ - 2\left( {n - k} \right)} \binom{{n}}{k},\\
H_{ - 1/2}  - H_{ - 1/2 - n + k}  =  - 2O_{n - k},
\end{align*}
and
\begin{equation*}
H_{k + 12}  - H_{n + 1/2}  = 2\left( {O_{k + 1}  - O_{n + 1} } \right).
\end{equation*}

\end{proof}

\begin{proposition}
If $n$ and $r$ are non-negative integers, then
\begin{align}
\sum_{k = 0}^{n - 1} {\frac{{2^{2k} }}{{2\left( {n - k} \right) - 1}}\binom{{2\left( {n - k} \right)}}{{n - k}}k^r \left( {1 - O_{n - k - 1} } \right)}  = \binom{{2n}}{n}\sum_{k = 0}^r {2^{2k} \binom{{2k}}{k}^{ - 1} k!\braces{r}{k}\binom{{n}}{k}\left( {O_n  - O_k } \right)} .
\end{align}
\end{proposition}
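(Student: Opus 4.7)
The plan is to specialize the master identity \eqref{eq.zha71dp} at $t = 1/2$, mirroring the proof of the preceding proposition (which took $t = -1/2$). On the left-hand side I will rewrite $\binom{1/2}{n-k}$ using the formula $\binom{1/2}{m} = (-1)^{m+1}2^{-2m}\binom{2m}{m}/(2m-1)$ (already invoked in the proof of \eqref{Riordan2}) with $m = n-k$. The resulting $2^{-2(n-k)} = 2^{2k}\cdot 2^{-2n}$ produces exactly the desired factor $2^{2k}/(2(n-k)-1)\binom{2(n-k)}{n-k}$ inside the sum, together with a global $2^{-2n}$ that will eventually cancel against its counterpart on the right-hand side.

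The second ingredient needed on the left is the harmonic difference $H_{1/2} - H_{1/2-(n-k)}$. Applying the telescoping identity $H_x - H_{x-m} = \sum_{j=0}^{m-1}(x-j)^{-1}$ at $x = 1/2$, I obtain
\[
H_{1/2} - H_{1/2-(n-k)} \;=\; 2\sum_{j=0}^{n-k-1}\frac{1}{1-2j} \;=\; 2 - 2O_{n-k-1} \;=\; 2\bigl(1 - O_{n-k-1}\bigr),
\]
valid for $0 \leq k \leq n-1$. A useful structural observation is that the $k=n$ term of the master identity vanishes identically (since $H_t - H_t = 0$), so the summation naturally truncates at $k = n-1$ and the formally undefined quantity $O_{-1}$ never has to be interpreted.

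For the right-hand side I will employ the two ingredients already used in the $t=-1/2$ specialization: the binomial reduction $\binom{n-1/2}{n-k} = 2^{2k-2n}\binom{2n}{n}\binom{2k}{k}^{-1}\binom{n}{k}$, and the half-integer harmonic formula $H_{h-1/2} = 2O_h - 2\ln 2$, the latter giving $H_{k-1/2} - H_{n-1/2} = -2(O_n - O_k)$ (the $\ln 2$ terms cancel). After collection, a common prefactor $(-1)^{n+1}\cdot 2\cdot 2^{-2n}$ appears on both sides and cancels, leaving the claimed identity.

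The only non-routine step is the half-integer harmonic evaluation above, together with the accompanying observation that the $k=n$ term of \eqref{eq.zha71dp} drops out; in particular one should verify the boundary case $m = 1$, where the identity must return $H_{1/2} - H_{-1/2} = 2 = 2(1 - O_0)$. Everything else is algebraic bookkeeping parallel to the two preceding propositions.
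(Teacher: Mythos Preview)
Your proposal is correct and follows exactly the same route as the paper's proof: specialize \eqref{eq.zha71dp} at $t=1/2$, use the formulas $\binom{1/2}{n-k} = \frac{(-1)^{n-k+1}2^{-2(n-k)}}{2(n-k)-1}\binom{2(n-k)}{n-k}$ and $\binom{n-1/2}{n-k} = 2^{2k-2n}\binom{2n}{n}\binom{2k}{k}^{-1}\binom{n}{k}$, together with $H_{1/2}-H_{1/2-n+k}=2(1-O_{n-k-1})$ and $H_{n-1/2}-H_{k-1/2}=2(O_n-O_k)$, and cancel the common prefactor. Your additional remarks about the vanishing of the $k=n$ term and the boundary check at $m=1$ are accurate and make the argument slightly more explicit than the paper's version.
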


\begin{proof}
Set $t=1/2$ in~\eqref{eq.zha71dp}. Use
\begin{align*}
\binom{{1/2}}{{n - k}} = \frac{{( - 1)^{n - k + 1} 2^{ - 2\left( {n - k} \right)} }}{{2\left( {n - k} \right) - 1}}\binom{{2\left( {n - k} \right)}}{{n - k}},\\
\binom{n - 1/2}{n - k} = 2^{2k - 2n} \binom{2n}{n} \binom{2k}{k}^{- 1} \binom{n}{k},\\
H_{1/2}  - H_{1/2 - n + k}  = 2\left( {1 - O_{n - k - 1} } \right),
\end{align*}
and
\begin{equation*}
H_{n - 1/2}  - H_{k - 1/2}  = 2\left( {O_n  - O_k } \right).
\end{equation*}

\end{proof}

\begin{theorem}
If $n$ and $r$ are non-negative integers and $t$ is not an integer, then
\begin{align}\label{eq.z15r0vl}
\sum_{k = 0}^n {( - 1)^{n-k} \binom{{n - t}}{{n - k}}k^r \left( {H_{k - t}  - H_{n - t} } \right)}  = \sum_{k = 0}^r {k!\braces{r}{k}\binom{{t}}{{n - k}}\left( {H_t  - H_{t - n + k} } \right)} .
\end{align}

\end{theorem}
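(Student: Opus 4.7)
The plan is to mirror the proof strategy of \eqref{eq.zha71dp}, but with a substitution that interchanges the roles of $x^k$ and $(1-x)^k$ in the polynomial identity \eqref{eq.zzq9wz8}. Concretely, I would substitute $x \mapsto 1-e^x$ directly in \eqref{eq.zzq9wz8}; this sends $x^k$ to $(1-e^x)^k$ and $(1-x)^k$ to $e^{kx}$, giving
$$\sum_{k=0}^n (-1)^k \binom{t}{n-k}(1-e^x)^k = \sum_{k=0}^n (-1)^{n-k} \binom{n-t}{n-k} e^{kx}.$$

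Next I would differentiate both sides $r$ times with respect to $x$ and evaluate at $x=0$ using Lemma~\ref{stirling2}. On the left, identity~\eqref{eq.r33yojo} contributes $(-1)^k k! \braces{r}{k}$, which cancels the leading $(-1)^k$ and automatically truncates the sum at $k=r$ since $\braces{r}{k}=0$ for $k>r$. On the right, differentiating $e^{kx}$ produces $k^r$. The result is the preliminary identity
$$\sum_{k=0}^r k! \braces{r}{k} \binom{t}{n-k} = \sum_{k=0}^n (-1)^{n-k} \binom{n-t}{n-k} k^r.$$

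Finally I would differentiate both sides with respect to $t$, using the two formulas
$$\frac{d}{dt}\binom{t}{n-k} = \binom{t}{n-k}(H_t - H_{t-n+k}), \qquad \frac{d}{dt}\binom{n-t}{n-k} = \binom{n-t}{n-k}(H_{k-t}-H_{n-t}),$$
to obtain~\eqref{eq.z15r0vl}. The only nontrivial decision is the substitution $x \mapsto 1-e^x$ (as opposed to $x \mapsto e^x$, which would simply reproduce~\eqref{eq.zha71dp}); after that the derivation is a mechanical application of Lemma~\ref{stirling2} together with the two harmonic-number derivative formulas. As a consistency check, \eqref{eq.z15r0vl} is equivalent to \eqref{eq.zha71dp} under the symmetry $t \mapsto n-t$, which can be used to verify the signs and index ranges.
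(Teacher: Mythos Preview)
Your proposal is correct. The paper's own proof is even shorter: it simply observes that \eqref{eq.z15r0vl} follows from \eqref{eq.zha71dp} by the symmetry $\binom{t}{n-k} \leftrightarrow (-1)^n\binom{n-t}{n-k}$ of \eqref{eq.zzq9wz8} --- precisely the observation you relegate to a consistency check at the end. Your substitution $x\mapsto 1-e^x$ is the composition of the symmetry $x\mapsto 1-x$ with the substitution $x\mapsto e^x$ used for \eqref{eq.zha71dp}, so you are in effect applying the symmetry \emph{before} the derivation rather than after; the two routes are equivalent, yours is just more self-contained.
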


\begin{proof}
A consequence of the
\begin{equation*}
\binom{{t}}{{n - k}} \leftrightarrow ( - 1)^n \binom{{n - t}}{{n - k}}
\end{equation*}
symmetry of~\eqref{eq.zzq9wz8}.
\end{proof}

\begin{proposition}
If $n$ and $r$ are non-negative integers, then
\begin{align}
&\sum_{k = 0}^n {\frac{{( - 1)^k  }}{{2k + 1}}\binom{{n}}{k}2^{2k}\binom{{2k}}{k}^{ - 1} k^r \left( {O_{n + 1}  - O_{k + 1} } \right)}\nonumber\\
&\qquad  = \frac1{2n + 1}\binom{2n}n^{-1}\sum_{k = 0}^r {( - 1)^k k!\braces{r}{k}2^{2k} \binom{{2\left( {n - k} \right)}}{{n - k}}O_{n - k} } .
\end{align}
\end{proposition}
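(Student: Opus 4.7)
The plan is to specialize the master identity~\eqref{eq.z15r0vl} at $t=-1/2$, in direct analogy with the $t=-1/2$ substitution used for the proposition attached to~\eqref{id}. The required half-integer binomial and harmonic conversions have all been recorded earlier in the paper; the only genuinely new ingredient is the evaluation of a harmonic number at a negative half-integer argument.

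First I would substitute $t=-1/2$ into both sides of~\eqref{eq.z15r0vl}. On the left-hand side this produces $\binom{n+1/2}{n-k}$ together with the harmonic difference $H_{k+1/2}-H_{n+1/2}$, which I would rewrite using
\[
\binom{n+1/2}{n-k} = \frac{2n+1}{2k+1}\binom{2n}{n}\binom{2k}{k}^{-1} 2^{-2(n-k)}\binom{n}{k}, \qquad H_{k+1/2}-H_{n+1/2} = 2(O_{k+1}-O_{n+1}).
\]
On the right-hand side, the binomial coefficient converts via $\binom{-1/2}{n-k} = (-1)^{n-k}\,2^{-2(n-k)}\binom{2(n-k)}{n-k}$. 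After combining $(-1)^{n-k}=(-1)^n(-1)^k$ on both sides, a common factor $-2(-1)^n\,2^{-2n}$ pulls out, and dividing by the remaining $(2n+1)\binom{2n}{n}$ on the left-hand side produces the claimed formula; the explicit factor $\frac{1}{2n+1}\binom{2n}{n}^{-1}$ on the right arises precisely from this normalization.

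The main obstacle is evaluating $H_{-1/2}-H_{-1/2-n+k}$, whose second term has a negative half-integer argument whenever $k<n$. The cleanest route is to extend the formula $H_{h-1/2}=2O_h-2\ln 2$ to negative integer $h$ by running the defining recurrence $H_x=H_{x-1}+1/x$ backwards from $H_{-1/2}=-2\ln 2$; a short induction then forces the symmetric extension $O_{-m}=O_m$ for $m\ge 0$, and hence $H_{-1/2}-H_{-1/2-n+k}=-2\,O_{n-k}$. With this extension in place, the rest of the derivation is routine bookkeeping of powers of two and signs, entirely parallel to the $t=\pm 1/2$ specializations carried out in the two preceding propositions.
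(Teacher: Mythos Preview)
Your proposal is correct and follows exactly the paper's approach: specialize \eqref{eq.z15r0vl} at $t=-1/2$ and apply the half-integer binomial and harmonic conversions. One small remark: the evaluation $H_{-1/2}-H_{-1/2-n+k}=-2O_{n-k}$ that you flag as the only genuinely new ingredient was in fact already recorded and used in the proof of the proposition following \eqref{eq.zha71dp}, so nothing new is required here either.
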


\begin{proof}
Set $t=-1/2$ in~\eqref{eq.z15r0vl}.
\end{proof}
In particular,
\begin{equation}\label{eq.k194mhu}
\sum_{k = 0}^n {\frac{{( - 1)^k  }}{{2k + 1}}\binom{{n}}{k}2^{2k}\binom{{2k}}{k}^{ - 1} O_{k+1}}=\frac1{(2n+1)^2},
\end{equation}
since
\begin{equation*}
\sum_{k = 0}^n {\frac{{( - 1)^k 2^{2k} }}{{2k + 1}}\binom{{n}}{k}\binom{{2k}}{k}^{ - 1} }  = \frac{1}{{2n + 1}}.
\end{equation*}
From~\eqref{eq.k194mhu}, we also have
\begin{equation}
\sum_{k = 0}^n {( - 1)^k \binom{{n}}{k}\frac{1}{{\left( {2k + 1} \right)^2 }}}  = \frac{{2^{2n} }}{{2n + 1}}\binom{{2n}}{n}^{ - 1} O_{n + 1} .
\end{equation}
\begin{proposition}
If $n$ and $r$ are non-negative integers, then
\begin{align}
&\sum_{k = 0}^n {( - 1)^k \binom{{n}}{k}2^{2k} \binom{{2k}}{k}^{ - 1} k^r \left( {O_n  - O_k } \right)}\nonumber\\
&\qquad  = \binom{{2n}}{n}^{ - 1} \sum_{k = 0}^r {\frac{{( - 1)^k }}{{2\left( {n - k} \right) - 1}}k!\braces{r}{k}2^{2k} \binom{{2\left( {n - k} \right)}}{{n - k}}\left( {1 - O_{n - k - 1} } \right)} .
\end{align}
\end{proposition}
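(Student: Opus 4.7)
The proposal is to imitate directly the proofs of the previous two propositions that obtained identities from the theorem containing equation \eqref{eq.z15r0vl}: specialize $t$ to the appropriate half-integer and use the already-tabulated conversions between binomials with half-integer upper index and central binomial coefficients, together with the conversion between shifted harmonic numbers and odd harmonic numbers.

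Concretely, I would substitute $t=1/2$ into equation \eqref{eq.z15r0vl}. On the left-hand side of \eqref{eq.z15r0vl} this requires
\[
\binom{n-1/2}{n-k}=2^{2k-2n}\binom{2n}{n}\binom{2k}{k}^{-1}\binom{n}{k},
\qquad
H_{k-1/2}-H_{n-1/2}=2(O_k-O_n),
\]
so after pulling out the common prefactor $2^{1-2n}\binom{2n}{n}$ and absorbing the sign $(-1)^{n-k}=(-1)^n(-1)^k$, the left-hand side becomes $(-1)^{n+1}2^{1-2n}\binom{2n}{n}$ times exactly the left-hand side of the proposition. On the right-hand side of \eqref{eq.z15r0vl}, I would use
\[
\binom{1/2}{n-k}=\frac{(-1)^{n-k+1}2^{-2(n-k)}}{2(n-k)-1}\binom{2(n-k)}{n-k},
\qquad
H_{1/2}-H_{1/2-n+k}=2\bigl(1-O_{n-k-1}\bigr),
\]
both of which were already invoked (or stated in equivalent form) in the proof of the preceding proposition. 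After pulling out the common prefactor $-2\cdot 2^{-2n}(-1)^n$, the right-hand side becomes $(-1)^{n+1}2^{1-2n}$ times the inner sum appearing in the proposition.

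Equating the two transformed sides, the factor $(-1)^{n+1}2^{1-2n}$ cancels, leaving $\binom{2n}{n}$ on the left which I would move to the right as $\binom{2n}{n}^{-1}$, yielding precisely the claim.

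The work is entirely mechanical given Theorem containing \eqref{eq.z15r0vl} and the half-integer binomial and harmonic conversion formulas; the one place that requires care is the sign bookkeeping, since the pair $(-1)^{n-k}$ on the left of \eqref{eq.z15r0vl} and $(-1)^{n-k+1}$ from $\binom{1/2}{n-k}$ on the right both introduce a factor of $(-1)^n$ that must be checked to cancel between the two sides so that the remaining $(-1)^k$ alternation matches what is printed in the statement. That sign reconciliation is the only subtlety I anticipate; everything else is straightforward substitution.
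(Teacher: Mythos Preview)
Your proposal is correct and is exactly the paper's approach: the paper's proof consists of the single instruction ``Set $t=1/2$ in~\eqref{eq.z15r0vl},'' and you have carried out precisely that substitution using the same half-integer binomial and harmonic-number conversions already recorded earlier in the paper. Your sign bookkeeping is accurate, and the common factor $(-1)^{n+1}2^{1-2n}$ indeed cancels to leave the stated identity.
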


\begin{proof}
Set $t=1/2$ in~\eqref{eq.z15r0vl}.
\end{proof}
In particular,
\begin{equation*}
\sum_{k = 0}^n {( - 1)^k \binom{{n}}{k}2^{2k} \binom{{2k}}{k}^{ - 1}\left( {O_n  - O_k } \right)}=\frac{1-O_{n-1}}{2n - 1},
\end{equation*}
which simplifies to give
\begin{equation}\label{eq.gbx6l42}
\sum_{k = 0}^n {( - 1)^k \binom{{n}}{k}2^{2k} \binom{{2k}}{k}^{ - 1}O_k}=-\frac{2n}{(2n - 1)^2},
\end{equation}
in view of~\eqref{eq.fw0v51n}; and which by the binomial transformation also gives
\begin{equation}
\sum_{k = 0}^n {( - 1)^k \binom{{n}}{k}\frac{{2k}}{{\left( {2k - 1} \right)^2 }}}  =  - 2^{2n} \binom{{2n}}{n}^{ - 1} O_n .
\end{equation}



\end{document}